\newtheorem{theorem}{Theorem}[section]
\newtheorem{lemma}{Lemma}[section]
\begin{document}
\begin{frontmatter}
\title{A second-order accurate implicit difference scheme for time fractional
reaction-diffusion equation with variable coefficients and time drift term}

\author[address1]{Yong-Liang Zhao\corref{correspondingauthor}}
\ead{uestc\_ylzhao@sina.com}

\author[address1]{Pei-Yong Zhu}
\ead{zpy6940@uestc.edu.cn}

\author[address2]{Xian-Ming Gu\corref{correspondingauthor}}
\cortext[correspondingauthor]{Corresponding author}
\ead{guxianming@live.cn}

\author[address1]{Xi-Le Zhao}
\ead{xlzhao122003@163.com}

\address[address1]{School of Mathematical Sciences, \\
	University of Electronic Science and Technology of China, \\
	Chengdu, Sichuan 611731, P.R. China}
\address[address2]{School of Economic Mathematics/Institute of Mathematics,\\
	Southwestern University of Finance and Economics,\\
	Chengdu, Sichuan 611130, P.R. China}
%%%%% Begin Abstract %%%%%%%%%%%
\begin{abstract}
An implicit finite difference scheme based on the $L2$-$1_{\sigma}$ formula is presented for
a class of one-dimensional time fractional reaction-diffusion equations with variable coefficients
and time drift term. The unconditional stability and convergence of this scheme
are proved rigorously by the discrete energy method, and the optimal convergence order in the $L_2$-norm is
$\mathcal{O}(\tau^2 + h^2)$ with time step $\tau$ and mesh size $h$.
Then, the same measure is exploited to solve the two-dimensional case of this problem
and a rigorous theoretical analysis of the stability and convergence is carried out.
Several numerical simulations are provided to show the efficiency and accuracy of our proposed schemes
and in the last numerical experiment of this work, three preconditioned iterative methods are employed for solving
the linear system of the two-dimensional case.
\end{abstract}
%%%%% end %%%%%%%%%%%

%%%%% Keywords %%%%%%%%%%%
\begin{keyword}
Caputo fractional derivative, $L2$-$1_{\sigma}$ formula, Finite difference scheme,
Time fractional reaction-diffusion equation, Iterative method.

\MSC[2010] 65M06, 65M12, 65N06
\end{keyword}
\end{frontmatter}
%%%% maketitle %%%%%
%\maketitle

%%%% Start %%%%%%
%
%\section{Preparation of Manuscript\label{sec2}}
%\label{sec2} The Title Page should contain the article title,
%authors' names and complete affiliations, footnotes to the title,
%and the postal address for manuscript correspondence (including
%e-mail address). The Abstract should provide a brief
%summary of the main findings of the paper.

\section{Introduction}
\label{sec1}
In the past decades, fractional calculus has growing interest been paid in modelling applications,
including the spread of HIV infection of CD4$+$ T-cells \cite{ding2009fractional}, entropy \cite{povstenko2015generalized},
hydrology \cite{benson2000application}, soft tissues such as mitral valve in the human heart \cite{shen2013fractional},
anomalous diffusion in transport dynamics of complex systems \cite{klages2008anomalous}, engineering and physics.
Many other examples can be found in Refs. \cite{Alfonso2014Fractional, Jiang2012Analytical, Qin2017308}.
In these models, the fractional diffusion equation (FDE)  has been studied by many researchers,
see \cite{luo2016quadratic, gu2015strang, Gu2017fast, Gao2016Two, hao2015high, cao2015compact,
du2016lubich, li20172d, zhou2018galerkin, li2016multifde} and references therein. %yu2015positivity

Since the solution of fractional operator at a given point depends on the solution behavior on the entire domain,
i.e., the fractional operators are nonlocal,
fractional diffusion equations (FDEs) tend to be more appropriate for the description of various materials
and processes with memory and hereditary properties than the normal integer-order counterparts.
At the same time, the nonlocal nature of fractional operators has an inherent challenge when facing FDEs,
namely the analytical solutions of FDEs are difficult to obtain,
except for some special cases \cite{podlubny1998fractional}.
For this reason, the proposal and study of numerical methods that are efficient,
accurate and easy to implement, are quite essential in obtaining the approximate solutions of FDEs.
Without doubt, it is worth noting that there still are a few effective numerous analytical methods,
for instance the Laplace transform method, the Fourier transform method and Adomian decomposition method.
Up to now abundant numerical methods have been proposed for solving the FDEs,
e.g., finite difference method \cite{Gu2017fast, Gao2016Two, hao2015high, cao2015compact,
Gu2014On, zhao2018delay}, finite element method \cite{Li2016Galerkin, Li2018fast, li2018fem},
collocation method \cite{luo2016quadratic}, meshless method \cite{dehghan2015error}
and spectral method \cite{Mao2016Efficient}.
Among them, the finite difference scheme is one of the most popular numerical schemes
employed to solve space and/or time FDEs, and we only mention some works in the next.

For the space FDEs, Meerschaert and Tadjeran \cite{Meerschaert2004Finite}
used the implicit Euler method based on the standard Gr\"{u}nwald-Letnikov formula
to discrete space-fractional advection-dispersion equation with first order accuracy,
but the obtained implicit difference scheme (IDS) is unstable.
To overcome this problem, Meerschaert and Tadjeran \cite{Meerschaert2004Finite}
first proposed the shifted Gr\"{u}nwald-Letnikov formula, which is unconditionally stable.
After their study, second-order approximations to space FDEs have been investigated,
Sousa and Li \cite{Sousa2015A} derived an unconditionally stable weighted average finite difference formula
for one-dimensional FDE with convergence $\mathcal{O}(\tau + h^2)$
where $\tau$ and $h$ are time step and mesh size, respectively.
Tian et al. \cite{tian2015class} proposed a class of second-order approximations,
which are termed as the weighted and shifted Gr\"{u}nwald difference (abbreviated as WSGD) operators,
to solve the two-sided one-dimensional space FDE numerically.
As expected, the convergence rate of their IDS is $\mathcal{O}(\tau^2 + h^2)$
by combining the Crank-Nicolson method (C-N).
Adopting the same idea and utilizing the quasi-compact numerical technique,
Zhou et al. \cite{zhou2013quasi} obtained a numerical approximate scheme
with convergence $\mathcal{O}(\tau^2 + h^3)$.
Subsequently, Hao et al. \cite{hao2015fourth}
applied a new fourth-order difference approximation,
which was derived by using the weighted average of the shifted Gr\"{u}nwald formulae
and combining with the compact numerical technique, to solve the two-sided one-dimensional space FDE.
They proved that the proposed quasi-compact difference scheme is unconditionally stable
and convergent in $L_2$-norm with the optimal order $\mathcal{O}(\tau^2 + h^4)$.
On the other hand, for the time FDEs, many early researches \cite{murillo2011explicit, gao2011compact, gao2012finite}
employed the $L1$ formula to obtain their difference schemes.
Then, Gao et al. \cite{gao2014new} applied
their new fractional numerical differentiation formula (called the $L1$-$2$ formula)
to solve the time-fractional sub-diffusion equations
with accuracy $\mathcal{O}(\tau^{3 - \alpha} + h^2)~(0 < \alpha < 1)$.
Alikhanov \cite{Alikhanov2015A} proposed a modified scheme,
which is of second order accuracy. The stability of his scheme was then proved
and numerical evidence has shown that this scheme for the $\alpha$-order Caputo fractional derivative
is of  second order accuracy.
Later, based on this modified scheme,
Yan et al. \cite{yan2017fastcaputo} designed
a fast high-order accurate numerical scheme (named $FL2$-$1_\sigma$)
to speed up the evaluation of the Caputo fractional derivative.
This scheme efficiently reduces the computational storage and cost for solving the time FDEs.
Although there are many studies on the space/time FDEs, numerical studies on space-time FDEs are still not extensive,
the readers are suggested to see \cite{Gu2017fast, Liu2007Stability, Shen2011Numerical, lin2018separable}
and references therein.

In this manuscript, a second-order IDS is concerned
for solving the initial-boundary value problem
of the one-dimensional (1D) time fractional reaction-diffusion equation (TFRDE)
with variable coefficients and time drift term:
\begin{equation}
\begin{cases}
\frac{\partial u(x,t)}{\partial t} + D_{0,t}^{\alpha} u(x,t) = \mathcal{L} u(x,t) + f(x,t),
& 0 \leq x \leq L, ~~  0 \leq t \leq T, \\
u(x,0) = u_{0}(x),  & 0 \leq x \leq L,\\
u(0,t) = \phi_1(t), ~~ u(L,t) = \phi_2(t),  & 0 \leq t \leq T,
\end{cases}
\label{eq1.1}
\end{equation}
where
\begin{equation*}
\mathcal{L} u(x,t) = \frac{\partial}{\partial x} \Big( k(x,t) \frac{\partial u(x,t)}{\partial x} \Big) - q(x,t) u(x,t),
\end{equation*}
$k(x,t) \geq C > 0$, $q(x,t) \geq 0$ and $f(x,t)$ are sufficiently smooth functions.
Moreover, the time fractional derivative in\/ \eqref{eq1.1} is
the Caputo fractional derivative \cite{podlubny1998fractional} with order $\alpha \in (0,1]$ %samko1993fractional
denoted by
\begin{equation}
D_{0,t}^{\alpha} u(x,t) = \frac{1}{\Gamma(1 - \alpha)}
\int_{0}^{t} \frac{\partial u(x,\xi)}{\partial \xi} \frac{d \xi}{(t - \xi)^{\alpha}}.
\label{eq1.2}
\end{equation}
The time drift term $\frac{\partial u(x,t)}{\partial t}$ is added to describe the motion time,
and this helps to distinguish the status of particles conveniently.
In particular, when $k(x,t) \equiv k$ is a constant and $q(x,t) = 0$,
Eq.\/ \eqref{eq1.1} reduces to a special time fractional
mobile/immobile transport model introduced in \cite{Liu2014A, Schumer2003Fractal}.

The rest of the paper is organized as follows: For clarity of presentation,
in the next section the full discretization of  Eq. \eqref{eq1.1} is introduced first,
then the stability analysis of the discrete scheme is carried out, and an error estimate shows that
the discrete scheme accuracy is of  $\mathcal{O}(\tau^2 + h^2)$.
In Section 3, we extend the TFRDE to two dimension, and the unconditionally stable and convergence are also proved.
Numerical examples are presented in Section 4 to illustrate the effectiveness of our proposed methods.
At last, some conclusions are drawn in Section 5.

\section{An implicit difference scheme for TFRDE}
\label{sec2}
In this section, an IDS is derived to discretize
the TFRDE defined in\/ \eqref{eq1.1}, and the stability and
error estimate of the IDS are analyzed in detail.

\subsection{Derivation of the second-order difference scheme}
\label{sec2.1}
To establish the numerical simulation scheme, we first discrete the solution region and
let the mesh $\bar{\omega}_{h \tau} = \bar{\omega}_{h} \times \bar{\omega}_{\tau}$,
where
$\bar{\omega}_{h} = \{ x_i =ih, ~i = 0, 1, \cdots, N; ~x_0 = 0, ~x_N = L \}$ and
$\bar{\omega}_{\tau} = \{ t_j = j \tau, ~j = 0, 1, \cdots, M; ~t_M = T \}$,
in which $h = \frac{L}{N}$, $\tau = \frac{T}{M}$ are the uniform spatial and temporal mesh sizes respectively,
and $N$, $M$ are two positive integers. Let
\begin{equation*}
\mathcal{S}_{h} = \{ \bm{v} \mid \bm{v} = (v_0, v_1, \cdots, v_N), ~v_0 = v_N = 0 \}
\end{equation*}
be defined on $\bar{\omega}_{h}$.
Then about the discretization of Caputo fractional derivative,
we utilize the $L2$-$1 \sigma$ formula derived by Alikhanov in \cite{Alikhanov2015A},
and some helpful properties for later analysis in next subsection are reviewed therewith.
% Lemma 2.1
\begin{lemma}(\cite[Lemma 2]{Alikhanov2015A})
Suppose $0< \alpha <1$, $\sigma = 1 - \frac{\alpha}{2}$, $y(t) \in C^3 [0,T]$, and
$t_{j + \sigma} = (j + \sigma) \tau$.
Then
\begin{equation*}
\left |  D_{0,t_{j + \sigma}}^{\alpha} y(t)
- \Delta^{\alpha}_{0,t_{j + \sigma}} y(t) \right | = \mathcal{O}(\tau^{3 - \alpha}),
\end{equation*}
where
\begin{equation}
\Delta^{\alpha}_{0,t_{j + \sigma}} y(t) =
\frac{\tau^{-\alpha}}{\Gamma(2 - \alpha)} \sum\limits^{j}_{s = 0} c^{(\alpha,\sigma)}_{j - s}
\left[ y(t_{s + 1}) - y(t_{s}) \right],
\label{eq2.1}
\end{equation}
and for $j = 0$,
\begin{equation*}
c^{(\alpha,\sigma)}_{0} = a^{(\alpha,\sigma)}_{0},
\end{equation*}
for $j \geq 1$,
\begin{equation*}
c^{(\alpha,\sigma)}_{s}=
\begin{cases}
a^{(\alpha,\sigma)}_{0} + b^{(\alpha,\sigma)}_{1}, & s = 0,\\
a^{(\alpha,\sigma)}_{s} + b^{(\alpha,\sigma)}_{s + 1} - b^{(\alpha,\sigma)}_{s}, & 1\leq s \leq j - 1,\\
a^{(\alpha,\sigma)}_{j} - b^{(\alpha,\sigma)}_{j}, & s = j.
\end{cases}
\end{equation*}
In which
\begin{equation*}
a^{(\alpha,\sigma)}_{0} = \sigma^{1 - \alpha}, ~ a^{(\alpha,\sigma)}_{l}
= (l + \sigma)^{1 - \alpha} - (l - 1 + \sigma)^{1 - \alpha} ~( l \geq 1),
\end{equation*}
and
\begin{equation*}
b^{(\alpha,\sigma)}_{l} = \frac{1}{2 - \alpha}
\left[(l + \sigma)^{2 - \alpha} - (l - 1 + \sigma)^{2 - \alpha} \right]
- \frac{1}{2} \left[ (l + \sigma)^{1 - \alpha} - (l - 1 + \sigma)^{1 - \alpha} \right].
\end{equation*}
\label{lemma2.1}
\end{lemma}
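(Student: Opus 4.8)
The plan is to realize $\Delta^{\alpha}_{0,t_{j+\sigma}}$ as a piecewise interpolatory quadrature for the Caputo integral evaluated at $t_{j+\sigma}$ and then estimate the quadrature error subinterval by subinterval. First I would split
\begin{equation*}
D_{0,t}^{\alpha} y(t_{j+\sigma}) = \frac{1}{\Gamma(1-\alpha)}\left(\sum_{s=0}^{j-1}\int_{t_s}^{t_{s+1}}\frac{y'(\xi)}{(t_{j+\sigma}-\xi)^{\alpha}}\,d\xi + \int_{t_j}^{t_{j+\sigma}}\frac{y'(\xi)}{(t_{j+\sigma}-\xi)^{\alpha}}\,d\xi\right),
\end{equation*}
and on each subinterval replace $y$ by an interpolating polynomial: the quadratic interpolant through $(t_{s-1},t_s,t_{s+1})$ on $[t_s,t_{s+1}]$ for $1\le s\le j-1$, the quadratic interpolant through $(t_0,t_1,t_2)$ on $[t_0,t_1]$, and the linear interpolant through $(t_j,t_{j+1})$ on $[t_j,t_{j+\sigma}]$. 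Inserting these, integrating against the kernel and collecting the coefficients of the increments $y(t_{s+1})-y(t_s)$ is a direct computation: the pieces $\int_{t_s}^{t_{s+1}}(t_{j+\sigma}-\xi)^{-\alpha}d\xi$ produce the weights $a^{(\alpha,\sigma)}_{l}$ and the quadratic corrections produce $b^{(\alpha,\sigma)}_{l}$, which reassemble into the $c^{(\alpha,\sigma)}_{j-s}$ of (\ref{eq2.1}) (for $j=0$ only the last term is present and one gets $c^{(\alpha,\sigma)}_0=a^{(\alpha,\sigma)}_0$). I would treat this identification as routine bookkeeping.

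For the truncation error, write $R_{j+\sigma}=D_{0,t}^{\alpha}y(t_{j+\sigma})-\Delta^{\alpha}_{0,t_{j+\sigma}}y(t)=\frac{1}{\Gamma(1-\alpha)}\left(R^{\mathrm{int}}+R^{\mathrm{last}}\right)$, where $R^{\mathrm{int}}$ gathers the errors on $[t_0,t_j]$ and $R^{\mathrm{last}}$ the error on $[t_j,t_{j+\sigma}]$. On each interior subinterval the key move is integration by parts: because every polynomial used there matches $y$ at both endpoints of its own subinterval and the kernel $(t_{j+\sigma}-\xi)^{-\alpha}$ is bounded for $\xi\le t_j$, the boundary terms drop out and one is left with $-\alpha\int_{t_s}^{t_{s+1}}(y-\Pi y)(\xi)(t_{j+\sigma}-\xi)^{-\alpha-1}d\xi$, where $(y-\Pi y)$ denotes the interpolation remainder. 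Using $|(y-\Pi y)(\xi)|\le C\tau^{3}\|y'''\|_{\infty}$ and summing, the integral telescopes and
\begin{equation*}
|R^{\mathrm{int}}|\le C\tau^{3}\|y'''\|_{\infty}\int_{t_0}^{t_j}\frac{d\xi}{(t_{j+\sigma}-\xi)^{\alpha+1}}=\frac{C\tau^{3}\|y'''\|_{\infty}}{\alpha}\Big[(\sigma\tau)^{-\alpha}-t_{j+\sigma}^{-\alpha}\Big]=\mathcal{O}(\tau^{3-\alpha}).
\end{equation*}

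The last subinterval is the crux, and it is where the choice $\sigma=1-\frac{\alpha}{2}$ is forced. There $(\Pi y)'\equiv\tau^{-1}(y(t_{j+1})-y(t_j))$ is constant, so after the substitution $\xi=t_{j+\sigma}-\eta$,
\begin{equation*}
R^{\mathrm{last}}=\int_0^{\sigma\tau}\eta^{-\alpha}\Big(y'(t_{j+\sigma}-\eta)-\tfrac{1}{\tau}\big(y(t_{j+1})-y(t_j)\big)\Big)\,d\eta.
\end{equation*}
Taylor expanding $y'(t_{j+\sigma}-\eta)$ and $y(t_{j+1}),y(t_j)$ about $t_{j+\sigma}$, the integrand equals $-\big(\eta+\tfrac{\tau}{2}(1-2\sigma)\big)y''(t_{j+\sigma})+\mathcal{O}\big((\eta^{2}+\tau^{2})\|y'''\|_{\infty}\big)$; the remainder integrates to $\mathcal{O}(\tau^{3-\alpha})$, while the explicit term equals $-\sigma^{1-\alpha}\tau^{2-\alpha}\big(\tfrac{\sigma}{2-\alpha}+\tfrac{1-2\sigma}{2(1-\alpha)}\big)y''(t_{j+\sigma})$, whose bracket vanishes exactly when $\sigma=\tfrac{2-\alpha}{2}$. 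Hence $R^{\mathrm{last}}=\mathcal{O}(\tau^{3-\alpha})$, and combining with the previous bound gives $|R_{j+\sigma}|=\mathcal{O}(\tau^{3-\alpha})$. Two points deserve care: one must not integrate by parts on $[t_j,t_{j+\sigma}]$ (the kernel is singular at the upper limit and $t_{j+\sigma}$ is not an interpolation node), which is why the direct expansion is used there; and a genuinely quadratic interpolant is needed on $[t_0,t_1]$, since a linear one would only give $\mathcal{O}(\tau^{2-\alpha})$ after the weighted integration.
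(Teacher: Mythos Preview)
The paper does not prove this lemma at all; it is quoted verbatim from Alikhanov's original article and simply cited. So there is no ``paper's own proof'' to compare against, and what you have written is in fact a faithful outline of Alikhanov's argument: realise $\Delta^{\alpha}_{0,t_{j+\sigma}}$ as a piecewise-polynomial quadrature for the Caputo integral, bound the interior error via integration by parts against the $\mathcal{O}(\tau^{3})$ interpolation remainder, and on the singular final subinterval use a direct Taylor expansion to see that the $\mathcal{O}(\tau^{2-\alpha})$ term cancels precisely when $\sigma=1-\alpha/2$. Your computation of that cancellation is correct.

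One point in the ``routine bookkeeping'' does need fixing. On the interior subintervals Alikhanov's construction uses, on $[t_{s-1},t_{s}]$ for $1\le s\le j$, the quadratic through $(t_{s-1},t_{s},t_{s+1})$ --- that is, each interval's interpolant reaches \emph{forward} by one node. Your description instead places the quadratic through $(t_{s-1},t_{s},t_{s+1})$ on $[t_{s},t_{s+1}]$, a backward shift. With your placement the interval $[t_{j-1},t_{j}]$ uses only $t_{j-2},t_{j-1},t_{j}$, so no interior term ever touches $y(t_{j+1})$; consequently the coefficient of $y(t_{j+1})-y(t_{j})$ would be $a^{(\alpha,\sigma)}_{0}$ alone, contradicting $c^{(\alpha,\sigma)}_{0}=a^{(\alpha,\sigma)}_{0}+b^{(\alpha,\sigma)}_{1}$. (Relatedly, with the correct forward placement there is nothing special about $[t_{0},t_{1}]$: it uses $(t_{0},t_{1},t_{2})$ just like every other interior interval, so your closing remark about needing a ``genuinely quadratic interpolant on $[t_{0},t_{1}]$'' as an exceptional case is a symptom of the same slip.) This does not affect your error bound --- any quadratic through three consecutive nodes gives an $\mathcal{O}(\tau^{3})$ remainder vanishing at the interval endpoints, so the integration-by-parts step still yields $\mathcal{O}(\tau^{3-\alpha})$ --- but it does mean that, as written, you would not recover the specific weights in~(\ref{eq2.1}), which is part of what the lemma asserts.
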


Here, the properties of $c^{(\alpha,\sigma)}_{j}$
proved in \cite{Alikhanov2015A} are revisited as below.
% Lemma 2.2
\begin{lemma}(\cite[Lemma 4]{Alikhanov2015A})
For any $\alpha ~ (0 < \alpha < 1)$ and $c^{(\alpha,\sigma)}_{j}$ defined in Lemma \ref{lemma2.1},
it holds
\begin{equation*}
c^{(\alpha,\sigma)}_{j} > \frac{1 - \alpha}{2} (j + \sigma)^{- \alpha},
\end{equation*}
\begin{equation*}
c^{(\alpha,\sigma)}_{0} > c^{(\alpha,\sigma)}_{1} > c^{(\alpha,\sigma)}_{2}
> \cdots > c^{(\alpha,\sigma)}_{j - 1} > c^{(\alpha,\sigma)}_{j},
\end{equation*}
where $\sigma = 1 - \frac{\alpha}{2}$.
\label{lemma2.2}
\end{lemma}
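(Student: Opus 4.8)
The plan is to reduce both assertions to elementary monotonicity and convexity of the power function $g(x):=x^{1-\alpha}$ on $(0,\infty)$, for which $g'(x)=(1-\alpha)x^{-\alpha}>0$, $g''(x)=-\alpha(1-\alpha)x^{-\alpha-1}<0$ and $g'''(x)>0$. Starting from the definitions in Lemma~\ref{lemma2.1} together with $(l+\sigma)^{p}-(l-1+\sigma)^{p}=p\int_{l-1+\sigma}^{l+\sigma}x^{p-1}\,dx$, I would first rewrite, for $l\ge 1$,
\[
a^{(\alpha,\sigma)}_l=g(l+\sigma)-g(l-1+\sigma),\qquad
b^{(\alpha,\sigma)}_l=\int_{l-1+\sigma}^{l+\sigma}\! g(x)\,dx-\tfrac12\big(g(l+\sigma)+g(l-1+\sigma)\big),
\]
with $a^{(\alpha,\sigma)}_0=g(\sigma)=\sigma^{1-\alpha}$. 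The second expression is exactly the trapezoidal-rule remainder, so $b^{(\alpha,\sigma)}_l=-\tfrac12\int_{l-1+\sigma}^{l+\sigma}g''(x)\,(x-l+1-\sigma)(l+\sigma-x)\,dx>0$, and substituting $x=l-1+\sigma+\theta$ shows that $l\mapsto b^{(\alpha,\sigma)}_l$ is strictly decreasing, because $|g''|$ is decreasing.

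Next I would derive two integral identities for the $c^{(\alpha,\sigma)}_j$ themselves. Substituting the integral forms and integrating by parts gives, for $j\ge 1$,
\[
c^{(\alpha,\sigma)}_j=a^{(\alpha,\sigma)}_j-b^{(\alpha,\sigma)}_j=\tfrac32 g(j+\sigma)-\tfrac12 g(j-1+\sigma)-\!\int_{j-1+\sigma}^{j+\sigma}\! g(x)\,dx=\int_0^1 g'(j-1+\sigma+t)\,\big(t+\tfrac12\big)\,dt .
\]
The lower bound is then immediate: for $j=0$ it reads $\sigma^{1-\alpha}>\tfrac{1-\alpha}{2}\sigma^{-\alpha}$, i.e.\ $\sigma=1-\tfrac\alpha2>\tfrac{1-\alpha}{2}$; for $j\ge 1$, since $g'$ is positive and decreasing one has $g'(j-1+\sigma+t)\ge g'(j+\sigma)$ on $[0,1]$ and $\int_0^1(t+\tfrac12)\,dt=1$, whence $c^{(\alpha,\sigma)}_j>g'(j+\sigma)=(1-\alpha)(j+\sigma)^{-\alpha}>\tfrac{1-\alpha}{2}(j+\sigma)^{-\alpha}$. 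For an interior index $1\le s\le j-1$, a short computation (the endpoint terms in $b^{(\alpha,\sigma)}_{s+1}-b^{(\alpha,\sigma)}_s$ partly cancel, and each remaining piece is written as a repeated integral) collapses $a^{(\alpha,\sigma)}_s+b^{(\alpha,\sigma)}_{s+1}-b^{(\alpha,\sigma)}_s$ to
\[
c^{(\alpha,\sigma)}_s=\int_0^1\!\!\int_0^1\psi\big(s-1+\sigma+u+v\big)\,du\,dv,\qquad \psi(x):=g'(x)-\tfrac12 g''(x)=(1-\alpha)\Big(x^{-\alpha}+\tfrac{\alpha}{2}x^{-\alpha-1}\Big),
\]
and $\psi$ is positive and strictly decreasing on $(0,\infty)$.

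For the monotone chain I would split according to Lemma~\ref{lemma2.1}. The interior part $c^{(\alpha,\sigma)}_1>c^{(\alpha,\sigma)}_2>\cdots>c^{(\alpha,\sigma)}_{j-1}$ follows at once from the last identity because $\psi$ is decreasing. At the left end, $c^{(\alpha,\sigma)}_0-c^{(\alpha,\sigma)}_1=\big(a^{(\alpha,\sigma)}_0-a^{(\alpha,\sigma)}_1\big)+\big(2b^{(\alpha,\sigma)}_1-b^{(\alpha,\sigma)}_2\big)$ (the $b^{(\alpha,\sigma)}_2$-term absent if $j=1$); here $2b^{(\alpha,\sigma)}_1>b^{(\alpha,\sigma)}_2$ since $2(\sigma+\theta)^{-\alpha-1}>(1+\sigma+\theta)^{-\alpha-1}$ pointwise in the integrand, and $a^{(\alpha,\sigma)}_0>a^{(\alpha,\sigma)}_1$, i.e.\ $2\sigma^{1-\alpha}>(1+\sigma)^{1-\alpha}$, is an elementary one-variable estimate (using, e.g., $2^{1/(1-\alpha)}\ge 1+\tfrac{\ln 2}{1-\alpha}$). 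At the right end (for $j\ge2$), inserting the two identities and writing the double integral for $c^{(\alpha,\sigma)}_{j-1}$ against the triangular weight $\Lambda(s):=\min(s,2-s)$ on $[0,2]$ reduces matters to
\[
c^{(\alpha,\sigma)}_{j-1}-c^{(\alpha,\sigma)}_j=\int_0^1 g'(j-2+\sigma+s)\,s\,ds+\int_1^2 g'(j-2+\sigma+s)\big(\tfrac52-2s\big)\,ds+\tfrac12\!\int_0^2 |g''(j-2+\sigma+s)|\,\Lambda(s)\,ds ,
\]
where the last term is positive, the first is $\ge\tfrac12 g'(j-1+\sigma)$, and --- pairing the part of the weight $\tfrac52-2s$ where it is positive with the larger values of $g'$ --- the middle one is $\ge-\tfrac12 g'(j-\tfrac34+\sigma)$, so the whole difference exceeds $\tfrac12\big(g'(j-1+\sigma)-g'(j-\tfrac34+\sigma)\big)>0$.

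I expect the crux to be locating the two integral representations of $c^{(\alpha,\sigma)}_j$ and of the interior $c^{(\alpha,\sigma)}_s$: once they are available, the positivity, the lower bound and the interior monotonicity are automatic. The remaining obstacle is the bookkeeping in the right-endpoint step $c^{(\alpha,\sigma)}_{j-1}>c^{(\alpha,\sigma)}_j$, where the weight $\tfrac52-2s$ changes sign and one must track which values of $g'$ it multiplies; the auxiliary inequality $a^{(\alpha,\sigma)}_0>a^{(\alpha,\sigma)}_1$ is the only place where a small ad hoc (but elementary) argument is needed.
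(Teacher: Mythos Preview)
The paper does not supply its own proof of this lemma; it is quoted from \cite{Alikhanov2015A} without argument. Your approach --- rewriting $a_l$, $b_l$ and the $c_s$ as integrals involving $g(x)=x^{1-\alpha}$ and then exploiting $g'>0$, $g''<0$, $g'''>0$ --- is essentially Alikhanov's own route, and your two integral identities for $c_j$ (endpoint case) and $c_s$ (interior case) are correct; the lower bound and the interior chain $c_1>\cdots>c_{j-1}$ drop out immediately from them, and your treatment of the right endpoint $c_{j-1}>c_j$ checks out line by line.

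There is one genuine gap. At the left endpoint you correctly reduce $c_0>c_1$ to $a_0>a_1$ plus a positive $b$-contribution, and $a_0>a_1$ is equivalent to $2\sigma^{1-\alpha}>(1+\sigma)^{1-\alpha}$, which is indeed true. But the hint you give, $2^{1/(1-\alpha)}\ge 1+\tfrac{\ln 2}{1-\alpha}$, does not close it: the target rewrites as $2^{1/(1-\alpha)}>1+\tfrac{1}{\sigma}=1+\tfrac{2}{2-\alpha}$, and inserting your bound forces $(2-\alpha)\ln 2>2(1-\alpha)$, i.e.\ $\alpha>\tfrac{2(1-\ln 2)}{2-\ln 2}\approx 0.47$, so the argument fails for small $\alpha$. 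A clean repair is Bernoulli's inequality: for $0<1-\alpha<1$ and $1/\sigma>0$ one has strictly
\[
\Bigl(1+\tfrac{1}{\sigma}\Bigr)^{1-\alpha}<1+\tfrac{1-\alpha}{\sigma}=1+\tfrac{2(1-\alpha)}{2-\alpha}<2,
\]
the last step because $2(1-\alpha)<2-\alpha$ whenever $\alpha>0$; this gives $(1+\sigma)^{1-\alpha}<2\sigma^{1-\alpha}$ directly.

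One side remark: your formula $b_l=\int_{l-1+\sigma}^{l+\sigma} g-\tfrac12\bigl(g(l+\sigma)+g(l-1+\sigma)\bigr)$ (the trapezoidal remainder) differs from the display in this paper, which has a minus sign inside the last bracket. Your version agrees with Alikhanov's original definition, for which the stated lemma is formulated and proved; the sign here appears to be a transcription slip.
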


As for approximation of the time drift term $\frac{\partial u(x,t)}{\partial t}$,
the Taylor expansion of the function $u(t)$ is employed
for $t = t_{j + 1}, t_j$ and $t_{j - 1}$ at the point $t_{j + \sigma}$, respectively.
Thus, the next lemma can be easily obtained after simple calculation.
% Lemma2.3
\begin{lemma}
Suppose $u \in C^3 [0,T]$, we have
\begin{equation*}
\begin{split}
\delta_{\hat{t}} u(t_j) & \equiv \frac{1}{2 \tau} [(2 \sigma + 1) u(t_{j + 1}) - 4 \sigma u(t_j) + (2 \sigma - 1) u(t_{j - 1})] \\
& = \frac{d u(t_{j + \sigma})}{d t} + \mathcal{O}(\tau^2), ~ j \geq 1.
\end{split}
\end{equation*}
\label{lemma2.3}
\end{lemma}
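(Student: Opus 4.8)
The plan is a direct Taylor-expansion argument, but carried out about the off-grid node $t_{j+\sigma}$ rather than about $t_j$. First I would record the signed offsets of the three stencil points from the expansion center: $t_{j+1}-t_{j+\sigma}=(1-\sigma)\tau$, $t_j-t_{j+\sigma}=-\sigma\tau$ and $t_{j-1}-t_{j+\sigma}=-(1+\sigma)\tau$. For $j\ge 1$ the whole stencil interval $[t_{j-1},t_{j+1}]$ lies in $[0,T]$, so since $u\in C^3[0,T]$ I may expand each of $u(t_{j+1})$, $u(t_j)$, $u(t_{j-1})$ about $t_{j+\sigma}$ through the quadratic term with a Lagrange remainder of the shape $\frac{1}{6}(\text{offset})^3 u'''(\xi)$, $\xi\in[t_{j-1},t_{j+1}]$; each remainder is bounded by $C_\sigma\tau^3\max_{[0,T]}|u'''|$.

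Next I would substitute these three expansions into the combination $(2\sigma+1)u(t_{j+1})-4\sigma u(t_j)+(2\sigma-1)u(t_{j-1})$ and collect powers of $\tau$. The coefficient of $u(t_{j+\sigma})$ is $(2\sigma+1)-4\sigma+(2\sigma-1)=0$; the coefficient of $\tau\,u'(t_{j+\sigma})$ is $(2\sigma+1)(1-\sigma)+4\sigma^2-(2\sigma-1)(1+\sigma)=2$; and the coefficient of $\tfrac{\tau^2}{2}u''(t_{j+\sigma})$ is $(2\sigma+1)(1-\sigma)^2-4\sigma^3+(2\sigma-1)(1+\sigma)^2=0$, which is the key cancellation (it holds identically in $\sigma$, so it is what dictates the particular weights $2\sigma\pm1$, $4\sigma$). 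Hence
\begin{equation*}
(2\sigma+1)u(t_{j+1})-4\sigma u(t_j)+(2\sigma-1)u(t_{j-1}) = 2\tau\,\frac{du}{dt}(t_{j+\sigma}) + R_j,
\end{equation*}
where $R_j$ collects the three cubic remainders and satisfies $|R_j|\le C_\sigma\tau^3\max_{[0,T]}|u'''|$. Dividing by $2\tau$ yields $\delta_{\hat{t}}u(t_j)=\frac{du}{dt}(t_{j+\sigma})+\mathcal{O}(\tau^2)$.

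There is no deep obstacle; the proof is entirely elementary. The only points requiring care are: (i) the linear combination must be evaluated at $t_{j+\sigma}$, so one must expand about that point and not about $t_j$; (ii) the hypothesis $j\ge 1$ is genuinely used, since otherwise $t_{j-1}$ would fall outside $[0,T]$ where $u$ is assumed three-times continuously differentiable; and (iii) the cubic term does \emph{not} cancel, so $u\in C^3$ is really needed and the estimate is sharp at order $\tau^2$. If an explicit constant is wanted, one keeps $C_\sigma=\frac{1}{12}\big[(2\sigma+1)(1-\sigma)^3+4\sigma^4+(2\sigma-1)(1+\sigma)^3\big]$ in place of the generic $\mathcal{O}(\tau^2)$.
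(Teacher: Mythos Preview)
Your proof is correct and follows exactly the approach the paper indicates: Taylor expansion of $u(t_{j+1})$, $u(t_j)$, $u(t_{j-1})$ about the off-grid point $t_{j+\sigma}$, followed by the weighted combination, which the paper summarizes as ``simple calculation.'' Your verification of the three coefficient identities (zeroth, first, second order) and the remainder bound is accurate, and your remarks on the necessity of $j\ge 1$ and $u\in C^3$ are apt.
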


On the other hand, \cite{Alikhanov2015A} also proved that
\begin{equation}
\mathcal{L} u(x,t) \mid_{(x_i, t_{j + \sigma})} = \sigma \Lambda u(x_i, t_{j + 1}) + (1 - \sigma) \Lambda u(x_i, t_j)
 + \mathcal{O}(\tau^2 + h^2),
 \label{eq2.2}
\end{equation}
where $\Lambda$ is a difference operator, which approximates the continuous operator $\mathcal{L}$, defined by
\begin{align*}
\Lambda u(x_i, t_j) = \frac{1}{h^2} \Big[ & k(x_{i + \frac{1}{2}}, t_{j + \sigma}) u(x_{i + 1}, t_j)
- \left( k(x_{i + \frac{1}{2}}, t_{j + \sigma}) + k(x_{i - \frac{1}{2}}, t_{j + \sigma}) \right) u(x_i, t_j) \\
& + k(x_{i - \frac{1}{2}}, t_{j + \sigma}) u(x_{i - 1}, t_j) \Big]  - q(x_i, t_{j + \sigma}) u(x_i,t_j).
\end{align*}

Assume $u(x,t)$ is a sufficiently smooth solution of the TFRDE\/ \eqref{eq1.1}.
For the sake of simplification, some symbols are introduced:
\begin{equation*}
u_{i}^{j + \sigma} = \sigma u_{i}^{j+1} + (1 - \sigma) u_{i}^{j}, \quad
k_{i}^{j + \sigma} = k(x_i, t_{j + \sigma}), \quad
q_{i}^{j + \sigma} = q(x_i, t_{j + \sigma}), \quad
f_{i}^{j + \sigma} = f(x_i,t_{j + \sigma}).
\end{equation*}

Using\/ \eqref{eq2.1}-\eqref{eq2.2} and omitting the small term,
the solution of\/ \eqref{eq1.1} can be approximated by the following IDS
for $(x,t) = (x_i,t_{j + \sigma}) \in \bar{\omega}_{h \tau}$, $i = 1, 2, \ldots, N-1$,
$j = 0, 1, \ldots, M-1$:
\begin{equation*}
\delta_{\hat{t}} u_{i}^{j} + \Delta^{\alpha}_{0,t_{j + \sigma}} u_i = \Lambda u_{i}^{j + \sigma}
+ f_{i}^{j + \sigma}.
\end{equation*}

There is a problem that cannot be ignored in the above equation:
when $ j = 0$, then $u_i^{j - 1} = u_i^{-1}$ is defined outside of $[0, T]$.
In numerical calculation, we handle with this problem mainly
by using the neighbouring function values to approximate $u_i^{-1}$,
that is,
\begin{equation*}
u_i^{-1} = u_i^0 - \tau \frac{\partial u_i^0}{\partial t} + \mathcal{O}(\tau^2).
\end{equation*}
If $\frac{\partial u_i^0}{\partial t} \neq 0$, our IDS only has first-order temporal accuracy.
Thus, in order to obtain the second-order accuracy in time,
we suppose $\frac{\partial u(x,0)}{\partial t} = 0$, then set $u_i^{-1} = u_i^0$.
The IDS with the accuracy order $\mathcal{O}(\tau^2 + h^2)$ is:
\begin{equation}
\begin{cases}
\delta_{\hat{t}} u_{i}^{j} + \Delta^{\alpha}_{0,t_{j + \sigma}} u_i = \Lambda u_{i}^{j + \sigma}
 + f_{i}^{j + \sigma},  & 1 \leq i \leq N-1,~~ 0 \leq j \leq M-1, \\
u_{i}^{0} = u_{0}(x_i), ~ u_i^{-1} = u_i^0, & 0 \leq i \leq N, \\
u_{0}^{j} = \phi_1(t_j), ~ u_{N}^{j} = \phi_2(t_j),  & 0 \leq j \leq M.
\end{cases}
\label{eq2.3}
\end{equation}

It is interesting to note that for $\alpha \rightarrow 1$,
the Crank-Nicolson difference scheme is obtained.
In the next subsection, we will proof the unconditional stability and give error estimate about this approximate scheme.
\subsection{Stability analysis and optimal error estimates}
\label{sec2.2}
Before exploring the stability and convergence of Eq. \eqref{eq2.3},
an inner product and the corresponding norm are introduced:
\begin{equation*}
(\bm{u}, \bm{v}) = h \sum_{i = 1}^{N - 1} u_i v_i, ~~ \| \bm{u} \| = \sqrt{(\bm{u}, \bm{u})},
\end{equation*}
here $\bm{u}, \bm{v} \in \mathcal{S}_h$ are arbitrary vectors.
Meanwhile, we need another two lemmas, which are essential for our proof,
see \cite{Alikhanov2015A, Sun2015Some}.
% Lemma 2.4
\begin{lemma}(\cite[Corollary 1]{Alikhanov2015A})
Let $V_\tau = \{ \bm{u} \mid \bm{u} = (u^0, u^1, \ldots, u^M) \}$.
For any $\bm{u} \in V_\tau$, one has the following inequality
\begin{equation*}
\Big[ \sigma u^{j + 1} + (1 - \sigma) u^j \Big] \Delta_{0,t_{j + \sigma}}^{\alpha} \bm{u}
\geq \frac{1}{2} \Delta_{0,t_{j + \sigma}}^{\alpha} (\bm{u})^2.
\end{equation*}
\label{lemma2.4}
\end{lemma}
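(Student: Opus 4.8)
The plan is to reduce the claimed functional inequality to a single algebraic inequality for the quadratic form generated by the coefficients $c^{(\alpha,\sigma)}_j$, and then to exploit the positivity and monotonicity of these coefficients recorded in Lemma~\ref{lemma2.2}. Since the common prefactor $\tau^{-\alpha}/\Gamma(2-\alpha)$ is strictly positive, dividing both sides by it, inserting the definition (\ref{eq2.1}), and using the telescoping identity $(u^{s+1})^2-(u^s)^2=(u^{s+1}-u^s)(u^{s+1}+u^s)$ shows that Lemma~\ref{lemma2.4} is equivalent to
\begin{equation*}
\sum_{s=0}^{j} c^{(\alpha,\sigma)}_{j-s}\,(u^{s+1}-u^s)\Big[\,2\sigma u^{j+1}+2(1-\sigma)u^{j}-u^{s+1}-u^{s}\,\Big]\;\ge\;0 .
\end{equation*}
Everything then rests on establishing the nonnegativity of this one sum.

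First I would peel off the top index $s=j$. There the bracket collapses to $2\sigma u^{j+1}+2(1-\sigma)u^{j}-u^{j+1}-u^{j}=(2\sigma-1)(u^{j+1}-u^{j})$, so that term equals $(2\sigma-1)\,c^{(\alpha,\sigma)}_0\,(u^{j+1}-u^{j})^2\ge 0$, because $\sigma=1-\tfrac{\alpha}{2}\in[\tfrac12,1)$ for $\alpha\in(0,1]$ and $c^{(\alpha,\sigma)}_0>0$ by Lemma~\ref{lemma2.2}. This is precisely the step that forces the choice $\sigma\ge\tfrac12$, and it is the reason a plain backward-type weighting ($\sigma=0$) would not close the argument.

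For the remaining indices $0\le s\le j-1$ I would express every nodal value through the forward increments $a_k:=u^{k+1}-u^{k}$, via $u^{j+1}-u^{s+1}=\sum_{k=s+1}^{j}a_k$ and $u^{j+1}-u^{s}=\sum_{k=s}^{j}a_k$; after this substitution the leftover sum becomes a quadratic form $\bm a^{\top}G\bm a$ in $\bm a=(a_0,\dots,a_j)$, whose entries are partial sums of the $c^{(\alpha,\sigma)}$'s. The heart of the proof is then to show that $G+G^{\top}$ is positive semidefinite. Here I would apply a discrete Abel summation (summation by parts) to recast the form as a nonnegative combination of squares of partial sums of increments, with weights given by the successive differences $c^{(\alpha,\sigma)}_{m}-c^{(\alpha,\sigma)}_{m+1}$ and by the tail coefficient $c^{(\alpha,\sigma)}_{j}$ — all of which are $\ge 0$ exactly because of the chain $c^{(\alpha,\sigma)}_0>c^{(\alpha,\sigma)}_1>\dots>c^{(\alpha,\sigma)}_{j}>0$ in Lemma~\ref{lemma2.2}. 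I expect this to be the main obstacle: keeping track of which $c^{(\alpha,\sigma)}$ multiplies which cross term $a_sa_k$ under the $\sigma$-weighting, and checking that the Abel rearrangement really leaves only nonnegative coefficients. Should the direct bookkeeping become unwieldy, an alternative is to invoke the positivity of the Toeplitz quadratic form attached to the positive, decreasing kernel $\{c^{(\alpha,\sigma)}_n\}$ through its generating function.

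Adding the nonnegative $s=j$ square to the positive-semidefinite quadratic form in the increments yields the displayed inequality, hence the lemma. Finally I would remark that this statement is just the discrete counterpart of the pointwise relation $v\,D^{\alpha}_{0,t}v\ge\tfrac12 D^{\alpha}_{0,t}v^{2}$, which is exactly why it is the right tool for the discrete energy estimates carried out in the remainder of Section~\ref{sec2.2}.
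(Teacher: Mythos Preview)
The paper does not actually prove Lemma~\ref{lemma2.4}; it is quoted without proof from \cite{Alikhanov2015A}, so there is no in-paper argument to compare against. Your outline is precisely the strategy used in Alikhanov's original proof: reduce to the increment form, split off the $s=j$ term (which is where the constraint $\sigma\ge\tfrac12$ enters and explains the choice $\sigma=1-\alpha/2$), and then handle the remaining sum by Abel summation using the monotonicity $c^{(\alpha,\sigma)}_0>c^{(\alpha,\sigma)}_1>\cdots>0$ from Lemma~\ref{lemma2.2}.

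The one soft spot is that you describe the decisive step---showing the residual quadratic form in the increments is positive semidefinite---only schematically (``I expect this to be the main obstacle''; ``should the direct bookkeeping become unwieldy\ldots''). In Alikhanov's paper this step is carried out explicitly: after the Abel rearrangement the form is written as a sum of terms $(c^{(\alpha,\sigma)}_{m-1}-c^{(\alpha,\sigma)}_{m})\bigl(\sum_{k} a_k\bigr)^2$ plus a tail term $c^{(\alpha,\sigma)}_{j}\bigl(\sum_k a_k\bigr)^2$, each nonnegative. Your sketch points exactly to this, so there is no genuine gap, but in a written proof you should display the rearranged sum rather than defer to a Toeplitz/generating-function alternative, which would be overkill here.
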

%%%%%%%%%%%%%%%%%%%%%%%%%%%%%%%%%%%%%%%%%%%%%%%%%%%%%%
% Lemma 2.5
\begin{lemma}(\cite[Lemma 3.5]{Sun2015Some})
For any grid functions $u^0, u^1, \cdots, u^N \in \mathcal{S}_h$, we have
\begin{equation*}
(\delta_{\hat{t}} u^k, \sigma u^{k + 1} + (1 - \sigma) u^k) \geq \frac{1}{4 \tau} (E^{k + 1} - E^k), ~k \geq 1,
\end{equation*}
with
\begin{equation*}
E^{k + 1} = (2 \sigma + 1) \| u^{k + 1} \|^2 - ( 2 \sigma - 1) \| u^{k} \|^2
+ (2 \sigma^2 + \sigma - 1) \| u^{k + 1} - u^k \|^2, ~ k \geq 0.
\end{equation*}
In addition, it holds
\begin{equation*}
E^{k + 1} \geq \frac{1}{\sigma} \| u^{k + 1} \|^2, ~ k \geq 0.
\end{equation*}
\label{lemma2.5}
\end{lemma}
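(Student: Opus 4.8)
The plan is to reduce Lemma \ref{lemma2.5} to pure algebra in the inner-product space $(\mathcal{S}_h,(\cdot,\cdot))$, with no appeal to regularity or to the sizes of $\tau$ and $h$. Set $d^{k}:=u^{k+1}-u^{k}$. A one-line computation gives the ``incremental'' form
\[
2\tau\,\delta_{\hat{t}}u^{k}=(2\sigma+1)d^{k}-(2\sigma-1)d^{k-1},
\]
since the coefficient of $u^{k}$ is $(2\sigma+1)-4\sigma+(2\sigma-1)=0$; likewise the test function rewrites as $\sigma u^{k+1}+(1-\sigma)u^{k}=u^{k}+\sigma d^{k}$. Everything below rests on the polarization identity $2(a,b)=\|a\|^{2}+\|b\|^{2}-\|a-b\|^{2}$, on the factorization $2\sigma^{2}+\sigma-1=(2\sigma-1)(\sigma+1)$, and on the sign facts $\sigma=1-\tfrac{\alpha}{2}\in[\tfrac12,1)$ and $2\sigma-1=1-\alpha\ge 0$ for $\alpha\in(0,1]$.

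For the first inequality I would evaluate the two inner products $(d^{k},u^{k}+\sigma d^{k})$ and $(d^{k-1},u^{k}+\sigma d^{k})$ exactly. Polarization gives
\[
(d^{k},u^{k}+\sigma d^{k})=\tfrac12\big(\|u^{k+1}\|^{2}-\|u^{k}\|^{2}\big)+\tfrac{2\sigma-1}{2}\|d^{k}\|^{2},
\]
and an analogous expression for $(d^{k-1},u^{k}+\sigma d^{k})$ containing $\tfrac12(\|u^{k}\|^{2}-\|u^{k-1}\|^{2})+\tfrac12\|d^{k-1}\|^{2}+\sigma(d^{k-1},d^{k})$. Inserting these into $2\tau(\delta_{\hat{t}}u^{k},u^{k}+\sigma d^{k})=(2\sigma+1)(d^{k},u^{k}+\sigma d^{k})-(2\sigma-1)(d^{k-1},u^{k}+\sigma d^{k})$ and collecting terms, the part carrying $\|u^{k+1}\|^{2},\|u^{k}\|^{2},\|u^{k-1}\|^{2}$ assembles exactly into $\tfrac12(E^{k+1}-E^{k})$ — this is precisely what pins down the coefficients $2\sigma+1$ and $-(2\sigma-1)$ in the definition of $E$ — while the residual terms in $\|d^{k}\|^{2}$, $\|d^{k-1}\|^{2}$, $(d^{k-1},d^{k})$, after using $2\sigma^{2}+\sigma-1=(2\sigma-1)(\sigma+1)$, collapse to $\tfrac{\sigma(2\sigma-1)}{2}\|d^{k}-d^{k-1}\|^{2}\ge 0$. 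Dividing by $2\tau$ yields the claimed bound.

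For the second inequality I would write $u^{k}=u^{k+1}-d^{k}$ and use Young's inequality $|2(u^{k+1},d^{k})|\le\tfrac1\sigma\|u^{k+1}\|^{2}+\sigma\|d^{k}\|^{2}$ to get $\|u^{k}\|^{2}\le(1+\tfrac1\sigma)\|u^{k+1}\|^{2}+(1+\sigma)\|d^{k}\|^{2}$. Substituting this into the definition of $E^{k+1}$ (legitimate because $2\sigma-1\ge 0$), the $\|d^{k}\|^{2}$ contributions cancel — again by $2\sigma^{2}+\sigma-1=(2\sigma-1)(\sigma+1)$ — and the coefficient of $\|u^{k+1}\|^{2}$ becomes $(2\sigma+1)-(2\sigma-1)\tfrac{\sigma+1}{\sigma}=\tfrac1\sigma$, which is exactly $E^{k+1}\ge\tfrac1\sigma\|u^{k+1}\|^{2}$.

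The argument is essentially bookkeeping, so the only care points are organizational: (i) grouping the residual of the first computation so that it is visibly the perfect square $\|d^{k}-d^{k-1}\|^{2}$ rather than an indefinite cross term, and (ii) choosing the Young weight to be exactly $\sigma$ in the second part so that the $\|d^{k}\|^{2}$ terms cancel. Both reduce to the identity $2\sigma^{2}+\sigma-1=(2\sigma-1)(\sigma+1)$ and to $2\sigma-1\ge 0$, which is precisely where the hypothesis $\alpha\in(0,1]$ (equivalently $\sigma=1-\alpha/2$) enters. I expect no genuine obstacle beyond the length of the algebra in the first step.
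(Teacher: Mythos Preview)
Your argument is correct. The increment identity $2\tau\,\delta_{\hat t}u^{k}=(2\sigma+1)d^{k}-(2\sigma-1)d^{k-1}$, the polarization computations, and the final collapse of the residual into $\tfrac{\sigma(2\sigma-1)}{2}\|d^{k}-d^{k-1}\|^{2}\ge 0$ all check out line by line; likewise the Young-weight choice $\sigma$ in the second part makes the $\|d^{k}\|^{2}$ contributions cancel exactly and leaves the coefficient $\tfrac{1}{\sigma}$ on $\|u^{k+1}\|^{2}$.

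Note, however, that the paper does not supply its own proof of this lemma: it is quoted verbatim from \cite{Sun2015Some} and used as a black box in the energy estimates of Theorems~\ref{th2.1} and~\ref{th3.1}. So there is no ``paper proof'' to compare against here. Your self-contained algebraic derivation is exactly the standard one underlying the cited result, and it has the merit of making explicit where the hypothesis $\sigma\ge\tfrac12$ (equivalently $\alpha\in(0,1]$) is used, namely through the sign $2\sigma-1\ge 0$ and the factorization $2\sigma^{2}+\sigma-1=(2\sigma-1)(\sigma+1)$.
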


From Lemma \ref{lemma2.4}, we obtain $E^0 = 2 \| u^{0} \|^2$.
With this in hand, the next theorem can be established.
%%%%%%%%%%%%%%%%%%%%%%%%%%%%%%%%%%%%%%%%%%%%%%%%%%%%
% Theorem 2.1
\begin{theorem}
Denote $u^{j + 1} = (u_1^{j + 1}, u_2^{j + 1}, \ldots, u_{N - 1}^{j + 1})^T$ and
$\left \| f^{j + \sigma} \right \|^2 = h \sum\limits_{i = 1}^{N - 1} f^2 (x_i, t_{j + \sigma})$.
Then the IDS\/ \eqref{eq2.3}
is unconditionally stable, and the following two priori estimates hold:
\begin{equation}
\| u^{1} \|^2
\leq
\Big( \frac{4 \sigma^{\alpha} T^{1 - \alpha}}{\Gamma(2 - \alpha)} + 2 \sigma \Big) \| u^0 \|^2
+ 4 \sigma^{\alpha} T^{1 + \alpha} \Gamma(2 - \alpha) \| f^{\sigma} \|^2,
\label{eq2.4}
\end{equation}
\begin{equation}
 \| u^{k} \|^2
\leq
C_1 \| u^{1} \|^2  + \Big( \frac{2 \sigma T^{1 - \alpha}}{\Gamma(2 - \alpha)} + 4 \sigma^3 \Big) \| u^{0} \|^2
 + 8 \sigma T^{1 + \alpha} \Gamma(1 - \alpha) \sum_{j = 1}^{k - 1} \| f^{j + \sigma} \|^2, ~ k \geq 2,
\label{eq2.5}
\end{equation}
where $C_1 = \frac{T^{1 - \alpha} \sigma}{\Gamma(1 - \alpha)}
+ \frac{2 \sigma (4 - 3 \alpha) T^{1 - \alpha}}{\Gamma(3 - \alpha)} + 4 \sigma^3 + 4 \sigma^2 - \sigma$.
\label{th2.1}
\end{theorem}
% Proof
\textbf{Proof.}
Taking the inner product of \/ \eqref{eq2.3} with $u^{j + \sigma}$, it has
\begin{equation*}
(\delta_{\hat{t}} u^j, u^{j + \sigma}) + (\Delta_{0,t_{j + \sigma}}^{\alpha} u, u^{j + \sigma}) =
(\Lambda u^{j + \sigma}, u^{j + \sigma}) + (f^{j + \sigma}, u^{j + \sigma}).
\end{equation*}

Using Lemmas \ref{lemma2.4}-\ref{lemma2.5} and noticing $(\Lambda u^{j + \sigma}, u^{j + \sigma}) \leq 0$,
it can be obtained that
\begin{equation}
\frac{1}{4 \tau} (E^{j + 1} - E^j) + \frac{1}{2} \Delta_{0,t_{j + \sigma}}^{\alpha} \left \| u \right \|^2
\leq (f^{j + \sigma}, u^{j + \sigma}).
\label{eq2.6}
\end{equation}
\textbf{Step 1.} When $j = 0$, from the inequality (\ref{eq2.6}), we have
\begin{equation*}
\frac{1}{4 \tau} (E^{1} - E^0)
+ \frac{1}{2\tau^{\alpha} \Gamma(2 - \alpha)} a_0^{(\alpha, \sigma)} ( \| u^1 \|^2 - \| u^0 \|^2)
\leq (f^{\sigma}, u^{\sigma}).
\end{equation*}
With the help of virtue Cauchy-Schwarz inequality, we arrive at
\begin{align*}
\| u^1 \|^2 + \frac{2 \tau \sigma}{T^{\alpha} \Gamma(2 - \alpha)} a_0^{(\alpha, \sigma)} \| u^1 \|^2
\leq & \frac{2 \tau \sigma}{T^{\alpha} \Gamma(2 - \alpha)} a_0^{(\alpha, \sigma)} \| u^0 \|^2
+ 2 \sigma \| u^0 \|^2 + \frac{\tau \sigma}{\varepsilon_1} \| f^{\sigma} \|^2 \\
&\quad + 8 \tau \sigma \varepsilon_1 (\| u^1 \|^2 + \| u^0 \|^2) , ~ \varepsilon_1 > 0.
\end{align*}
Let $\varepsilon_1 = \frac{1}{4 T^{\alpha} \Gamma(2 - \alpha)} a_0^{(\alpha, \sigma)}$,
it gives immediately the estimate for $u^1$, that is
\begin{align*}
\| u^1 \|^2
& \leq
\Big( \frac{4 \tau \sigma}{T^{\alpha} \Gamma(2 - \alpha) a_0^{(\alpha, \sigma)}} + 2 \sigma \Big) \| u^0 \|^2
+ \frac{4 \tau \sigma T^{\alpha} \Gamma(2 - \alpha)}{a_0^{(\alpha, \sigma)}} \| f^{\sigma} \|^2 \\
& \leq
\Big( \frac{4 T^{1 - \alpha} \sigma^{\alpha}}{\Gamma(2 - \alpha)} + 2 \sigma \Big) \| u^0 \|^2
+ 4 \sigma^{\alpha} T^{1 + \alpha} \Gamma(2 - \alpha) \| f^{\sigma} \|^2.
\end{align*}
\textbf{Step 2.} When $j \geq 1$, summing up for $j$ in (\ref{eq2.6}) from $1$ to $k - 1$ and doing some simple manipulations,
it obtains
\begin{align}
\frac{1}{4 \tau} (E^{k} & - E^1)
+ \frac{1}{2 \tau^{\alpha} \Gamma(2 - \alpha)} \Big[ c_0^{(\alpha, \sigma)} \sum_{j = 1}^{k - 1} \| u^{j + 1} \|^2
- \sum_{j = 1}^{k - 1} \sum_{s = 2}^{j} ( c_{j - s}^{(\alpha, \sigma)} - c_{j - s + 1}^{(\alpha, \sigma)} ) \| u^{s} \|^2 \Big]
\nonumber \\
& \leq \frac{1}{2 \tau^{\alpha} \Gamma(2 - \alpha)} \| u^{0} \|^2 \sum_{j = 1}^{k - 1} c_{j}^{(\alpha, \sigma)}
+ \frac{1}{2 \tau^{\alpha} \Gamma(2 - \alpha)} \| u^{1} \|^2 \sum_{j = 1}^{k - 1} ( c_{j - 1}^{(\alpha, \sigma)} - c_{j}^{(\alpha, \sigma)} )
\nonumber \\
&\qquad + \sum_{j = 1}^{k - 1} \| f^{j + \sigma} \| \cdot ( \sigma \| u^{j + 1} \| + (1 - \sigma) \| u^{j} \|).
\label{eq2.7}
\end{align}
To estimate the second term on the left hand side of inequality\/ \eqref{eq2.7},
Lemma \ref{lemma2.2} is applied.
Then
\begin{align*}
&\frac{1}{2 \tau^{\alpha} \Gamma(2 - \alpha)} \Big[ c_0^{(\alpha, \sigma)} \sum_{j = 1}^{k - 1} \| u^{j + 1} \|^2
- \sum_{j = 1}^{k - 1} \sum_{s = 2}^{j} ( c_{j - s}^{(\alpha, \sigma)} - c_{j - s + 1}^{(\alpha, \sigma)} ) \| u^{s} \|^2 \Big]
\\ &\quad = \frac{1}{2 \tau^{\alpha} \Gamma(2 - \alpha)} \sum_{j = 2}^{k} c_{k - j}^{(\alpha, \sigma)} \| u^{j} \|^2
\geq
\frac{1}{2 \tau^{\alpha} \Gamma(2 - \alpha)} \frac{1 - \alpha}{2} (j - 1 + \sigma)^{- \alpha} \sum_{j = 2}^{k} \| u^{j} \|^2\\
&\quad \geq
\frac{1}{4 T^{\alpha} \Gamma(1 - \alpha)} \sum_{j = 2}^{k} \| u^{j} \|^2.
\end{align*}
Bringing above estimate to inequality\/ \eqref{eq2.7} gives
\begin{align}
& \| u^{k} \|^2 + \frac{\tau \sigma}{T^{\alpha} \Gamma(1 - \alpha)} \sum_{j = 2}^{k} \| u^{j} \|^2
\nonumber \\
&~~ \leq \sigma E^1
+ \frac{2 \tau^{1 - \alpha} \sigma (4 - 3 \alpha) (k - 1 + \sigma)^{1 - \alpha}}{\Gamma(3 - \alpha)} \| u^{1} \|^2
+ \frac{2 \tau^{1 - \alpha} \sigma (k - 1 + \sigma)^{1 - \alpha}}{\Gamma(2 - \alpha)} \| u^{0} \|^2
\nonumber \\
&\qquad\qquad + 4 \tau \sigma \varepsilon_2 \sum_{j = 1}^{k - 1} \Big( \sigma \| u^{j + 1} \| + (1 - \sigma) \| u^{j} \| \Big)^2
+ \frac{\tau \sigma}{\varepsilon_2} \sum_{j = 1}^{k - 1} \| f^{j + \sigma} \|^2
\nonumber \\
&~~ \leq \sigma \Big[ (4 \sigma^2 + 4 \sigma - 1) \| u^{1} \|^2 + 4 \sigma^2 \| u^{0} \|^2 \Big]
+ \frac{2 \sigma (4 - 3 \alpha) T^{1 - \alpha}}{\Gamma(3 - \alpha)} \| u^{1} \|^2
\nonumber \\
&\qquad\qquad + \frac{2 \sigma T^{1 - \alpha}}{\Gamma(2 - \alpha)} \| u^{0} \|^2
+ 8 \tau \sigma \varepsilon_2 \sum_{j = 1}^{k} \| u^{j} \|^2
+ \frac{\tau \sigma}{\varepsilon_2} \sum_{j = 1}^{k - 1} \| f^{j + \sigma} \|^2, ~ \varepsilon_2 > 0,
\label{eq2.8}
\end{align}
where $E^1 \leq (4 \sigma^2 + 4 \sigma - 1) \| u^1 \|^2 + 4 \sigma^2 \| u^0 \|^2$.
Taking $\varepsilon_2 = \frac{1}{8 T^{\alpha} \Gamma(1 - \alpha)}$,  inequality\/ \eqref{eq2.8} leads to
\begin{align*}
\| u^{k} \|^2
& \leq
\Big[ \frac{\tau \sigma}{T^{\alpha} \Gamma(1 - \alpha)} + \frac{2 \sigma  (4 - 3 \alpha) T^{1 - \alpha}}{\Gamma(3 - \alpha)}
 + 4 \sigma^3 + 4 \sigma^2 - \sigma \Big] \| u^{1} \|^2 \\
&\qquad + \Big( \frac{2 \sigma T^{1 - \alpha}}{\Gamma(2 - \alpha)}
+ 4 \sigma^3 \Big) \| u^{0} \|^2 + 8 \tau \sigma T^{\alpha} \Gamma(1 - \alpha) \sum_{j = 1}^{k - 1} \| f^{j + \sigma} \|^2 \\
& \leq
\Big[ \frac{T^{1 - \alpha} \sigma}{\Gamma(1 - \alpha)} + \frac{2 \sigma (4 - 3 \alpha) T^{1 - \alpha}}{\Gamma(3 - \alpha)}
 + 4 \sigma^3 + 4 \sigma^2 - \sigma \Big] \| u^{1} \|^2 \\
&\qquad + \Big( \frac{2 \sigma T^{1 - \alpha}}{\Gamma(2 - \alpha)}
+ 4 \sigma^3 \Big) \| u^{0} \|^2 + 8 \sigma T^{1 + \alpha} \Gamma(1 - \alpha) \sum_{j = 1}^{k - 1} \| f^{j + \sigma} \|^2.
\end{align*}
The proof of Theorem \ref{th2.1} is completed. \hfill $\Box$

With the above proof, the convergence of the difference scheme\/ \eqref{eq2.3} is easy to obtain.
% Theorem 2.2
\begin{theorem}
Let $u(x,t)$ be the sufficiently smooth exact solution of\/ \eqref{eq1.1},
$\{ u_i^j \mid x_i \in \bar{\omega}_h, 0 \leq j \leq M \}$ be the solution
of the problem\/ \eqref{eq2.3}. Let
$e_i^j = u(x_i, t_j) - u_i^j  ~(0 \leq i \leq N, ~ 0 \leq j \leq M)$
and $e^j = [e_1^j, e_2^j, \cdots, e_{N - 1}^j]^{T}  ~(0 \leq j \leq M)$.
Then, for $j = 0, 1, 2, \cdots, M$, we have
\begin{equation*}
\| e^j \| \leq C_2 (\tau^2 + h^2), ~ 0 \leq j \leq M,
\end{equation*}
where $C_2$ is a positive constant, which may depend on $\alpha$ and $T$.
\label{th2.2}
\end{theorem}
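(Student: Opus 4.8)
The plan is to reduce the convergence statement to the a priori bounds already established in Theorem~\ref{th2.1}, applied to the error instead of to the solution. First I would write down the error equation. Substituting the exact solution $u(x,t)$ into the difference scheme (\ref{eq2.3}) and using the consistency estimates — Lemma~\ref{lemma2.1} for $\Delta^{\alpha}_{0,t_{j + \sigma}}$, Lemma~\ref{lemma2.3} for $\delta_{\hat{t}}$, and the relation (\ref{eq2.2}) for $\Lambda u^{j+\sigma}$ — produces a residual $R_i^{j+\sigma}$ with $|R_i^{j+\sigma}| \le C(\tau^2+h^2)$ uniformly in $i$ and $j$, where $C$ depends on $\alpha$, $T$ and bounds on the derivatives of $u$ (here $\mathcal{O}(\tau^{3-\alpha})$ coming from the fractional part is absorbed since $3-\alpha \ge 2$). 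At $j=0$ the scheme actually uses the convention $u_i^{-1}=u_i^0$; a Taylor expansion together with the compatibility hypotheses $\partial_t u(x,0)=\partial_t^2 u(x,0)=0$ shows that the corresponding discrete drift operator applied to $u$ still matches $\partial_t u(x_i,t_{\sigma})$ to order $\mathcal{O}(\tau^2)$, so the uniform bound on $R_i^{j+\sigma}$ persists for every $j$. Subtracting (\ref{eq2.3}) then gives, for $e^j=(e_1^j,\cdots,e_{N-1}^j)^{T}$ and $e^{j+\sigma}=\sigma e^{j+1}+(1-\sigma)e^{j}$,
\[
\delta_{\hat{t}} e_i^{j} + \Delta^{\alpha}_{0,t_{j + \sigma}} e_i = \Lambda e_i^{j+\sigma} + R_i^{j+\sigma},\qquad 1\le i\le N-1,\ 0\le j\le M-1,
\]
with homogeneous data $e_i^0=0$, $e_0^j=e_N^j=0$, and the convention $e_i^{-1}=e_i^0$.

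The key observation is that this error system has exactly the algebraic form of (\ref{eq2.3}) with $u$ replaced by $e$, $f$ replaced by $R$, and zero initial and boundary data. Hence the entire derivation of Theorem~\ref{th2.1} — taking the inner product with $e^{j+\sigma}$, invoking Lemmas~\ref{lemma2.4}--\ref{lemma2.5}, using $(\Lambda e^{j+\sigma},e^{j+\sigma})\le0$, and $E^0=2\|e^0\|^2=0$ — applies verbatim and yields
\[
\|e^1\|^2 \le 4\sigma^{\alpha}T^{1+\alpha}\Gamma(2-\alpha)\,\|R^{\sigma}\|^2,
\]
\[
\|e^k\|^2 \le C_1\|e^1\|^2 + 8\tau\sigma T^{\alpha}\Gamma(1-\alpha)\sum_{j=1}^{k-1}\|R^{j+\sigma}\|^2,\qquad k\ge2,
\]
with $C_1$ the constant from Theorem~\ref{th2.1}. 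Crucially, in the last sum I would retain the factor $\tau$ rather than dropping it through $\tau\le T$ as was done in the stability proof.

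Finally I would combine these with the residual bound. Since $\|R^{j+\sigma}\|^2 = h\sum_{i=1}^{N-1}(R_i^{j+\sigma})^2 \le L\,C^2(\tau^2+h^2)^2$, the first estimate gives $\|e^1\| \le C_2(\tau^2+h^2)$; and
\[
\tau\sum_{j=1}^{k-1}\|R^{j+\sigma}\|^2 \le (k-1)\tau\,L C^2(\tau^2+h^2)^2 \le T L C^2(\tau^2+h^2)^2 .
\]
Substituting both into the estimate for $\|e^k\|^2$ with $k\ge2$ and taking square roots gives $\|e^j\|\le C_2(\tau^2+h^2)$ for all $0\le j\le M$, with $C_2>0$ depending only on $\alpha$, $T$, $L$ and the smoothness bounds of $u$.

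The main obstacle is not the algebra but the bookkeeping of the $\tau$ factor: one must resist the simplification $\tau\le T$ used for stability, so that the residual enters through the ``$\tau\sum_j$'' channel and is converted into ``$T\cdot\max_j$''; otherwise a spurious $\tau^{-1}$ appears and the $\mathcal{O}(\tau^2+h^2)$ rate is lost. The second delicate point, which is purely a regularity issue, is establishing the uniform $\mathcal{O}(\tau^2+h^2)$ control of $R_i^{j+\sigma}$ at the first step $j=0$, which is exactly where the compatibility conditions $\partial_t u(x,0)=\partial_t^2 u(x,0)=0$ and the convention $u_i^{-1}=u_i^0$ are needed.
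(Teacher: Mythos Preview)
Your proposal is correct and follows exactly the paper's approach: subtract the scheme (\ref{eq2.3}) from the exact relation to obtain the homogeneous error system with source $R_i^{j+\sigma}=\mathcal{O}(\tau^2+h^2)$, then rerun the energy argument of Theorem~\ref{th2.1}. Your explicit care in retaining the factor $\tau$ in $\tau\sum_{j}\|R^{j+\sigma}\|^2$ (rather than applying the final form of (\ref{eq2.6}), where $\tau$ has already been bounded by $T$) is precisely the detail the paper's one-line ``similar to Theorem~\ref{th2.1}'' glosses over, and without it the rate would indeed degrade by a factor $\tau^{-1}$.
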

% Proof
\textbf{Proof.}
Subtracting\/ \eqref{eq2.3} from\/ \eqref{eq1.1}, the error equations are represented as:
\begin{equation*}
\begin{cases}
\delta_{\hat{t}} e_{i}^{j} + \Delta^{\alpha}_{0,t_{j + \sigma}} e_i = \Lambda e_{i}^{j + \sigma}
 + R_{i}^{j + \sigma},  & 1 \leq i \leq N-1,~~ 0 \leq j \leq M-1, \\
e_{i}^{-1} = e_{i}^{0} = 0,  & 0 \leq i \leq N, \\
e_{0}^{j} = e_{N}^{j} = 0,  & 0 \leq j \leq M,
\end{cases}
\end{equation*}
with $R_i^j = \mathcal{O}(\tau^2 + h^2)$. Then the following procedure is similar to Theorem \ref{th2.1},
the error $e^j$ yields
\begin{equation*}
\| e^j \| \leq C_2 (\tau^2 + h^2), ~ 0 \leq j \leq M,
\end{equation*}
where $C_2$ is a positive constant, which may depend on $\alpha$ and $T$.
\hfill $\Box$

Theorem \ref{th2.2} implies that our numerical scheme converges to
the optimal order $\mathcal{O}(\tau^2 + h^2)$ in the $L_2$-norm,
when the solution of Eq. \eqref{eq1.1} is sufficiently smooth.
If the solution of Eq. \eqref{eq1.1} is non-smooth,
several interesting alternative approaches \cite{zeng2017nonsmooth,liao2018nonsmooth}
have been introduced to address this problem.

For convenience, Eq. \eqref{eq2.3} can be rewritten into the equivalent matrix form:
\begin{equation}
\begin{cases}
\begin{split}
\mathcal{M}^{j + 1} u^{j + 1} = & B^{j} u^{j} - h^2 (2 \sigma - 1) u^{j - 1}
- \frac{2 \tau^{1 - \alpha} h^2}{\Gamma(2 - \alpha)}
\sum_{s = 0}^{j - 1} c_{j - s}^{(\alpha, \sigma)} (u^{s + 1} - u^{s}) \\
&\quad + 2 \tau h^2 f^{j + \sigma} + \eta^{j + \sigma}, \quad 0 \leq j \leq M - 1,
\end{split} \\
u^{0} = u_0,
\end{cases}
\label{eq2.9}
\end{equation}
where $f^{j + \sigma} = [f_{1}^{j + \sigma}, f_{2}^{j + \sigma}, \cdots, f_{N - 1}^{j + \sigma}]^{T}$,
$u_0 = [u_{0} (x_1), u_{0} (x_2), \cdots, u_{0} (x_{N - 1}) ]^{T}$,
\begin{equation*}
\eta^{j + \sigma} = 2 \tau [\sigma k_{1 - \frac{1}{2}}^{j + \sigma} u_0^{j + 1}
+ (1 - \sigma) k_{1 - \frac{1}{2}}^{j + \sigma} u_0^{j}, 0, \cdots, 0, \sigma k_{N - 1 + \frac{1}{2}}^{j + \sigma} u_N^{j + 1}
+ (1 - \sigma) k_{N - 1 + \frac{1}{2}}^{j + \sigma} u_N^{j} ]^{T},
\end{equation*}
and
\begin{equation*}
\mathcal{M}^{j + 1} = \Big[ h^2 (2 \sigma + 1)
+ \frac{2 \tau^{1 - \alpha} h^2}{\Gamma(2 - \alpha)} c_{0}^{(\alpha, \sigma)} \Big] I
- 2 \sigma \tau (A^{j + \sigma} - h^2 Q^{j + \sigma}),
\end{equation*}
\begin{equation*}
B^j = \Big( 4 h^2 \sigma + \frac{2 \tau^{1 - \alpha} h^2}{\Gamma(2 - \alpha)} c_{0}^{(\alpha, \sigma)} \Big) I
+ 2 (1 - \sigma) \tau (A^{j + \sigma} - h^2 Q^{j + \sigma}).
\end{equation*}
Whereas
\begin{align*}
A^{j + \sigma} = & - \textrm{diag} \Big( \Big[ (k_{1 - \frac{1}{2}}^{j + \sigma} + k_{1 + \frac{1}{2}}^{j + \sigma}),
(k_{2 - \frac{1}{2}}^{j + \sigma} + k_{2 + \frac{1}{2}}^{j + \sigma}), \cdots,
(k_{N - 1 - \frac{1}{2}}^{j + \sigma} + k_{N - 1 + \frac{1}{2}}^{j + \sigma}) \Big] \Big) \\
& + \textrm{diag} \Big( \Big[ k_{2 - \frac{1}{2}}^{j + \sigma}, k_{3 - \frac{1}{2}}^{j + \sigma}, \cdots,
k_{N - 1 - \frac{1}{2}}^{j + \sigma} \Big], -1 \Big)
+ \textrm{diag} \Big( \Big[ k_{1 + \frac{1}{2}}^{j + \sigma}, k_{2 + \frac{1}{2}}^{j + \sigma}, \cdots,
k_{N - 2 + \frac{1}{2}}^{j + \sigma}\Big], 1 \Big),
\end{align*}
$Q^{j + \sigma} = \textrm{diag}(q_{1}^{j + \sigma}, q_{2}^{j + \sigma}, \cdots, q_{N - 1}^{j + \sigma})$
and $I$ is the identity matrix with an appropriate size.
Upon above definitions, it is obvious that the coefficient matrix $\mathcal{M}^{j + 1}$ is a symmetric tridiagonal matrix.

\section{The two-dimensional problem of TFRDE}
\label{sec3}
In practical applications, one-dimensional problems are rare, therefore in this section,
the two-dimensional (2D) TFRDE is studied:
\begin{equation}
\frac{\partial u(x,y,t)}{\partial t} + D_{0,t}^{\alpha} u(x,y,t)
=  \mathcal{N} u(x,y,t) + f(x,y,t),  ~ (x,y) \in  [0, L_x] \times [0, L_y], ~  0 \leq t \leq T,
\label{eq3.1}
\end{equation}
with initial condition
\begin{equation}
u(x,y,0) = u_{0}(x,y), ~ (x,y) \in [0, L_x] \times [0, L_y],
\label{eq3.2}
\end{equation}
and boundary value conditions
\begin{equation}
u(0,y,t) = \psi_1(y,t), ~ u(L_x,y,t) = \psi_2(y,t),  ~ 0 \leq t \leq T,
\label{eq3.3}
\end{equation}
\begin{equation}
u(x,0,t) = g_1(x,t), ~ u(x,L_y,t) = g_2(x,t), ~ 0 \leq t \leq T,
\label{eq3.4}
\end{equation}
where
\begin{equation*}
 \mathcal{N} u(x,y,t)  = \frac{\partial}{\partial x} \Big( d(x,y,t) \frac{\partial u(x,y,t)}{\partial x} \Big)
+ \frac{\partial}{\partial y} \Big( k(x,y,t) \frac{\partial u(x,y,t)}{\partial y} \Big) - q(x,y,t) u(x,y,t),
\end{equation*}
$d(x,y,t) \geq C_3 > 0$, $k(x,y,t) \geq C_4 > 0$, $q(x,y,t) \geq 0$ and $f(x,y,t)$ are sufficiently smooth functions.
In the rest of this section, we will deduce a second-order difference scheme and investigate its stability and convergence.

\subsection{Difference scheme for the 2D TFRDE}
\label{sec3.1}
Taking two positive integers $N_x$ and $N_y$, then $h_x = \frac{L_x}{N_x}$, $h_y = \frac{L_y}{N_y}$.
Denote
\begin{equation*}
\hat{\omega} = \{ x_i = i h_x, ~y_l = l h_y, ~0 \leq i \leq N_x, ~0 \leq l \leq N_y;
~x_0 = x_{N_x} = 0, ~y_0 = y_{N_y} = 0 \},
\end{equation*}
and
\begin{equation*}
\hat{\mathcal{S}} = \{ \bm{v} \mid \bm{v} = (v_{il})_{0 \leq i \leq N_x, ~0 \leq l \leq N_y};
~v_{0 l} = v_{N_x l} = 0, ~v_{i 0} = v_{i N_y} = 0\}.
\end{equation*}

Now the fully discrete scheme is derived.
Let
\begin{align*}
\tilde{\Lambda} u(x_i, y_l ,t_j)
& = \frac{1}{h_x^2} \Big[ d(x_{i - \frac{1}{2}},y_{l}, t_{j + \sigma}) u(x_{i - 1}, y_{l},t_{j})
- \Big( d(x_{i - \frac{1}{2}},y_{l}, t_{j + \sigma})
+ d(x_{i + \frac{1}{2}},y_{l}, t_{j + \sigma}) \Big) \\
&\quad \times u(x_{i}, y_{l},t_{j}) + d(x_{i + \frac{1}{2}},y_{l}, t_{j + \sigma})
u(x_{i + 1}, y_{l},t_{j}) \Big]
+ \frac{1}{h_y^2} \Big[ k(x_{i},y_{l - \frac{1}{2}}, t_{j + \sigma})\\
&\quad \times u(x_{i}, y_{l - 1},t_{j}) - \Big( k(x_{i},y_{l - \frac{1}{2}}, t_{j + \sigma})
+ k(x_{i},y_{l + \frac{1}{2}}, t_{j + \sigma}) \Big) u(x_{i}, y_{l},t_{j})\\
&\quad + k(x_{i},y_{l + \frac{1}{2}}, t_{j + \sigma}) u(x_{i}, y_{l + 1},t_{j}) \Big]
- q(x_i, y_l, t_{j + \sigma}) u(x_i, y_l, t_j)
\end{align*}
be a difference operator approximates the continuous operator $\mathcal{N}$.
Afterwards, similar implementation as presented in Eq. \eqref{eq2.2}, we have
\begin{equation*}
\mathcal{N} u(x, y, t) \mid_{(x_i, y_l , t_{ j + \sigma})}
 = \sigma \tilde{\Lambda} u(x_i, y_l, t_{j + 1}) + (1 - \sigma)  \tilde{\Lambda} u(x_i, y_l, t_{j})
 + \mathcal{O}(\tau^2 + h_{x}^{2} + h_{y}^{2}),
\end{equation*}
and some other new notations are given based on Section 2
\begin{equation*}
u_{i l}^{j + \sigma} = \sigma u_{i l}^{j + 1} + (1 - \sigma) u_{i l}^{j},  \quad
d_{i l}^{j + \sigma} = d(x_i, y_l , t_{j + \sigma}), \quad
k_{i l}^{j + \sigma} = k(x_i, y_l , t_{j + \sigma}),
\end{equation*}
\begin{equation*}
q_{i l}^{j + \sigma} = q(x_i, y_l , t_{j + \sigma}), \quad
f_{i l}^{j + \sigma} = f(x_i, y_l , t_{j + \sigma}).
\end{equation*}

Similar to the process of dealing with 1D case in Section 2 obtains
\begin{equation*}
\delta_{\hat{t}} u_{i l}^{j} + \Delta_{0, t_{j + \sigma}}^{\alpha}
= \tilde{\Lambda} u_{i l}^{j + \sigma} + f_{i l}^{j + \sigma}.
\end{equation*}
When $j = 0$, $u_{i l}^{j - 1} = u_{i l}^{-1}$ is defined outside of $[0,T]$,
in the same way as Section 2,
\begin{equation*}
u_{i l}^{-1} = u_{i l}^0 - \tau \frac{\partial u_{i l}^0}{\partial t} + \mathcal{O}(\tau^2).
\end{equation*}
In order to obtain the second-order accuracy in time, we assume
$\frac{\partial u(x, y, 0)}{\partial t} = 0$,
thus set $u_{i l}^{-1} = u_{i l}^{0}$.
Adding the discrete initial-boundary conditions,
our approximate scheme for the problem\/ \eqref{eq3.1}-\eqref{eq3.4} is
\begin{equation}
\begin{cases}
\delta_{\hat{t}} u_{i l}^{j} + \Delta_{0, t_{j + \sigma}}^{\alpha} = \tilde{\Lambda} u_{i l}^{j + \sigma}
+ f_{i l}^{j + \sigma}, & 1 \leq i \leq N_x - 1, 1 \leq l \leq N_y - 1, 0 \leq j \leq M, \\
u_{i l}^{0} = u_0(x_i, y_l), ~ u_{i l}^{-1} = u_{i l}^{0}, & 0 \leq i \leq N_x, 0 \leq l \leq N_y, \\
u_{0 l}^{j} = \psi_1(y_l, t_j), ~ u_{N_x l}^{j} = \psi_2(y_l, t_j),  & 0 \leq l \leq N_y, 0 \leq j \leq M, \\
u_{i 0}^{j} = g_1(x_i, t_j), ~ u_{i N_y}^{j} = g_2(x_i, t_j),  & 0 \leq i \leq N_x, 0 \leq j \leq M.
\end{cases}
\label{eq3.5}
\end{equation}

\subsection{Stability and convergence analysis}
\label{sec3.2}
In order to probe into the scheme\/ \eqref{eq3.5},
an inner product and the corresponding norm are defined to facilitate our subsequent analysis

\begin{equation*}
(\bm{u}, \bm{v}) = h_x h_y \sum_{i = 1}^{N_x - 1} \sum_{l = 1}^{N_y - 1}u_{i l} v_{i l},
~~ \| \bm{u} \| = \sqrt{(\bm{u}, \bm{u})}, ~~ \forall \bm{u}, \bm{v} \in \hat{\mathcal{S}}.
\end{equation*}
The priori estimate of\/ \eqref{eq3.5} is given.
% Theorem 3.1
\begin{theorem}
Suppose $\{ U_{i l}^{j + 1} \mid  0 \leq i \leq N_x, ~0 \leq l \leq N_y, ~0 \leq j \leq M \}$
be the solution of \/ \eqref{eq3.5} and denote
$\left \| f^{j + \sigma} \right \|^2
= h_x h_y \sum\limits_{i = 1}^{N_x - 1}\sum\limits_{l = 1}^{N_y - 1} f^2 (x_i, y_l, t_{j + \sigma})$.
Then the IDS\/ \eqref{eq3.5}
is unconditionally stable, and the following two priori estimates hold:
\begin{equation}
\| U^{1} \|^2
\leq
\Big( \frac{4 T^{1 - \alpha} \sigma^{\alpha}}{\Gamma(2 - \alpha)} + 2 \sigma \Big) \| U^0 \|^2
+ 4 \sigma^{\alpha} T^{1 + \alpha} \Gamma(2 - \alpha) \| f^{\sigma} \|^2,
\label{eq3.6}
\end{equation}
\begin{equation}
 \| U^{k} \|^2
\leq
C_1 \| U^{1} \|^2  + \Big( \frac{2 \sigma T^{1 - \alpha}}{\Gamma(2 - \alpha)} + 4 \sigma^3 \Big) \| U^{0} \|^2
 + 8 \sigma T^{1 + \alpha} \Gamma(1 - \alpha) \sum_{j = 1}^{k - 1} \| f^{j + \sigma} \|^2, ~ k \geq 2,
\label{eq3.7}
\end{equation}
where $C_1$ is given in Theorem \ref{th2.1}.
\label{th3.1}
\end{theorem}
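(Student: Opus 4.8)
The plan is to reproduce the proof of Theorem \ref{th2.1} almost verbatim, with the 1D inner product $(\cdot,\cdot)$ and norm $\|\cdot\|$ replaced throughout by $(\cdot,\cdot)_{2D}$ and $\|\cdot\|_{2D}$. First I would take the discrete inner product $(\cdot,\cdot)_{2D}$ of the first equation of (\ref{eq3.5}) with $U^{j+\sigma} = \sigma U^{j+1} + (1-\sigma) U^{j}$, giving
\[
(\delta_{\hat{t}} U^{j}, U^{j+\sigma})_{2D} + (\Delta_{0,t_{j+\sigma}}^{\alpha} U, U^{j+\sigma})_{2D}
= (\tilde{\Lambda} U^{j+\sigma}, U^{j+\sigma})_{2D} + (f^{j+\sigma}, U^{j+\sigma})_{2D}.
\]
Lemma \ref{lemma2.4} applies at each grid point $(x_i,y_l)$ and, after multiplying by the positive weights $h_x h_y$ and summing, yields $(\Delta_{0,t_{j+\sigma}}^{\alpha} U, U^{j+\sigma})_{2D} \ge \tfrac12 \Delta_{0,t_{j+\sigma}}^{\alpha}\|U\|_{2D}^2$; Lemma \ref{lemma2.5} (again applied componentwise and summed) controls the drift term by $\tfrac{1}{4\tau}(E^{j+1}-E^{j})$, where $E^{k+1}$ is now built from $\|\cdot\|_{2D}$, and $E^{0} = 2\|U^{0}\|_{2D}^{2}$ under the same assumption $\partial_t u(\cdot,\cdot,0) = \partial_t^2 u(\cdot,\cdot,0) = 0$ used to set $U^{-1} = U^{0}$.

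The one genuinely new ingredient is the sign of the diffusion term. I would prove $(\tilde{\Lambda} \bm{v}, \bm{v})_{2D} \le 0$ for every $\bm{v} \in \hat{\mathcal{S}}$ by discrete summation by parts in each coordinate direction separately: write $\tilde{\Lambda} = \Lambda^{(x)} + \Lambda^{(y)} - q\,\cdot$ where $\Lambda^{(x)}$ collects the $x$-differences and $\Lambda^{(y)}$ the $y$-differences. For fixed $l$, Abel summation in $i$ together with $v_{0l} = v_{N_x l} = 0$ gives $h_x\sum_{i} (\Lambda^{(x)}\bm{v})_{il} v_{il} = -\tfrac{1}{h_x}\sum_{i} d_{i-\frac12,l}^{j+\sigma} (v_{il}-v_{i-1,l})^2 \le 0$ since $d \ge C_3 > 0$; multiplying by $h_y$ and summing over $l$, and doing the symmetric computation in the $y$-direction with $k \ge C_4 > 0$ and $v_{i0} = v_{iN_y} = 0$, and finally using $-h_x h_y\sum_{i,l} q_{il}^{j+\sigma} v_{il}^2 \le 0$ because $q \ge 0$, we obtain $(\tilde{\Lambda}\bm{v},\bm{v})_{2D} \le 0$. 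Inserting these three facts into the identity above produces the 2D analogue of (\ref{eq2.6}):
\[
\frac{1}{4\tau}(E^{j+1}-E^{j}) + \frac12 \Delta_{0,t_{j+\sigma}}^{\alpha}\|U\|_{2D}^2 \le (f^{j+\sigma}, U^{j+\sigma})_{2D}.
\]

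From this inequality the remainder of the argument is a line-by-line copy of Steps 1 and 2 of Theorem \ref{th2.1}. For $j=0$ I would apply Cauchy--Schwarz with a free parameter $\varepsilon_1$, choose $\varepsilon_1 = \tfrac{1}{4 T^{\alpha}\Gamma(2-\alpha)} a_0^{(\alpha,\sigma)}$, and use $a_0^{(\alpha,\sigma)} = \sigma^{1-\alpha}$ and $\tau \le T$ to arrive at (\ref{eq3.6}). For $j \ge 1$ I would sum the displayed inequality from $1$ to $k-1$, reorganize the resulting double sum exactly as in (\ref{eq2.7}), invoke Lemma \ref{lemma2.2} to bound $\sum_{j=2}^{k} c_{k-j}^{(\alpha,\sigma)}\|U^{j}\|_{2D}^2 \ge \tfrac{1}{4T^{\alpha}\Gamma(1-\alpha)}\sum_{j=2}^{k}\|U^{j}\|_{2D}^2$, use the two estimates $E^{1} \le (4\sigma^2+4\sigma-1)\|U^{1}\|_{2D}^2 + 4\sigma^2\|U^{0}\|_{2D}^2$ and $E^{k} \ge \tfrac1\sigma\|U^{k}\|_{2D}^2$ from Lemma \ref{lemma2.5}, then apply Cauchy--Schwarz with a parameter $\varepsilon_2 = \tfrac{1}{8T^{\alpha}\Gamma(1-\alpha)}$ to absorb the $\sum\|U^{j}\|_{2D}^2$ term on the left, which delivers (\ref{eq3.7}) with the very same constant $C_1$. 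Because the functional $E$ and the coefficient estimates for $c_j^{(\alpha,\sigma)}$, $a_0^{(\alpha,\sigma)}$ are insensitive to the spatial dimension, no constant changes. I do not expect any real obstacle here; the only step needing fresh (but routine) work is the 2D discrete summation-by-parts establishing $(\tilde{\Lambda}\bm{v},\bm{v})_{2D} \le 0$.
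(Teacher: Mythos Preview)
Your proposal is correct and follows essentially the same approach as the paper: take the $(\cdot,\cdot)_{2D}$ inner product with $U^{j+\sigma}$, use Lemmas \ref{lemma2.4}--\ref{lemma2.5} and $(\tilde{\Lambda}U^{j+\sigma},U^{j+\sigma})_{2D}\le 0$ to reach the key inequality, then run the two-step argument with the same choices of the Cauchy--Schwarz parameters. The only difference is that you spell out the summation-by-parts verification of $(\tilde{\Lambda}\bm{v},\bm{v})_{2D}\le 0$, whereas the paper simply asserts it.
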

% Proof
\textbf{Proof.}
In this proof, we take advantage of the method in Theorem \ref{th2.1} again.
Taking the inner product of\/ \eqref{eq3.5} with $U^{j + \sigma} = \sigma U^{j + 1} + (1 - \sigma) U^{j}$,
it results
\begin{equation*}
(\delta_{\hat{t}} U^j, U^{j + \sigma}) + (\Delta_{0,t_{j + \sigma}}^{\alpha} U, U^{j + \sigma}) =
(\tilde{\Lambda} U^{j + \sigma}, U^{j + \sigma}) + (f^{j + \sigma}, U^{j + \sigma}).
\end{equation*}

Using Lemmas \ref{lemma2.4}-\ref{lemma2.5} and noticing
$(\tilde{\Lambda} U^{j + \sigma}, U^{j + \sigma}) \leq 0$,
one obtains
\begin{equation}
\frac{1}{4 \tau} (E^{j + 1} - E^j) + \frac{1}{2} \Delta_{0,t_{j + \sigma}}^{\alpha} \left \| U \right \|^2
\leq (f^{j + \sigma}, U^{j + \sigma}).
\label{eq3.8}
\end{equation}

\textbf{Step 1.} When $j = 0$.
From the inequality\/ \eqref{eq3.8}, it has
\begin{equation*}
\frac{1}{4 \tau} (E^{1} - E^0)
+ \frac{1}{2 \Gamma(2 - \alpha)} a_0^{(\alpha, \sigma)} ( \| U^1 \|^2 - \| U^0 \|^2)
\leq (f^{\sigma}, U^{\sigma}).
\end{equation*}
With the aid of virtue Cauchy-Schwarz inequality, we arrive at
\begin{align*}
\| U^1 \|^2 + \frac{2 \tau \sigma}{T^{\alpha} \Gamma(2 - \alpha)} a_0^{(\alpha, \sigma)} \| U^1 \|^2
\leq & \frac{2 \tau \sigma}{T^{\alpha} \Gamma(2 - \alpha)} a_0^{(\alpha, \sigma)} \| U^0 \|^2
+ 2 \sigma \| U^0 \|^2 + \frac{\tau \sigma}{\varepsilon_3} \| f^{\sigma} \|^2 \\
&\qquad + 8 \tau \sigma \varepsilon_3 (\| U^1 \|^2 + \| U^0 \|^2) , ~ \varepsilon_3 > 0,
\end{align*}
Let $\varepsilon_3 = \frac{1}{4 T^{\alpha} \Gamma(2 - \alpha)} a_0^{(\alpha, \sigma)}$,
it gives immediately the estimate for $u^1$, that is
\begin{align*}
\| U^1 \|^2
& \leq
\Big( \frac{4 \tau \sigma}{T^{\alpha} \Gamma(2 - \alpha) a_0^{(\alpha, \sigma)}} + 2 \sigma \Big) \| U^0 \|^2
+ \frac{4 \tau \sigma T^{\alpha} \Gamma(2 - \alpha)}{a_0^{(\alpha, \sigma)}} \| f^{\sigma} \|^2 \\
& \leq
\Big( \frac{4 T^{1 - \alpha} \sigma^{\alpha}}{\Gamma(2 - \alpha)} + 2 \sigma \Big) \| U^0 \|^2
+ 4 \sigma^{\alpha} T^{1 + \alpha} \Gamma(2 - \alpha) \| f^{\sigma} \|^2.
\end{align*}

\textbf{Step 2.} When $j \geq 1$, summing up for $j$ in\/ \eqref{eq3.8} from $1$ to $k - 1$
and doing some simple manipulations, it results
\begin{equation}
\begin{split}
\frac{1}{4 \tau} (E^{k} & - E^1)
+ \frac{1}{2 \tau^{\alpha} \Gamma(2 - \alpha)}
\left[ c_0^{(\alpha, \sigma)} \sum_{j = 1}^{k - 1} \| U^{j + 1} \|^2
- \sum_{j = 1}^{k - 1} \sum_{s = 2}^{j} ( c_{j - s}^{(\alpha, \sigma)}
- c_{j - s + 1}^{(\alpha, \sigma)} ) \| U^{s} \|^2 \right] \\
& \leq \frac{1}{2 \tau^{\alpha} \Gamma(2 - \alpha)} \| U^{0} \|^2 \sum_{j = 1}^{k - 1} c_{j}^{(\alpha, \sigma)}
+ \frac{1}{2 \tau^{\alpha} \Gamma(2 - \alpha)} \| U^{1} \|^2
\sum_{j = 1}^{k - 1} ( c_{j - 1}^{(\alpha, \sigma)} - c_{j}^{(\alpha, \sigma)} ) \\
&\qquad + \sum_{j = 1}^{k - 1} \| f^{j + \sigma} \| \cdot
\left[ \sigma \| U^{j + 1} \| + (1 - \sigma) \| U^{j} \| \right].
\end{split}
\label{eq3.9}
\end{equation}
To estimate the second term on the left hand side of inequality\/ \eqref{eq3.9},
Lemma \ref{lemma2.2} is applied.
Then
\begin{equation*}
\begin{split}
& \frac{1}{2 \tau^{\alpha} \Gamma(2 - \alpha)}
\left[ c_0^{(\alpha, \sigma)} \sum_{j = 1}^{k - 1} \| U^{j + 1} \|^2
- \sum_{j = 1}^{k - 1} \sum_{s = 2}^{j} ( c_{j - s}^{(\alpha, \sigma)}
- c_{j - s + 1}^{(\alpha, \sigma)} ) \| U^{s} \|^2 \right] \\
&\quad = \frac{1}{2 \tau^{\alpha} \Gamma(2 - \alpha)}
\sum_{j = 2}^{k} c_{k - j}^{(\alpha, \sigma)} \| U^{j} \|^2 \\
&\quad \geq
\frac{1}{2 \tau^{\alpha} \Gamma(2 - \alpha)} \frac{1 - \alpha}{2}
(j - 1 + \sigma)^{- \alpha} \sum_{j = 2}^{k} \| U^{j} \|^2 \\
&\quad \geq
\frac{1}{4 T^{\alpha} \Gamma(1 - \alpha)} \sum_{j = 2}^{k} \| U^{j} \|^2.
\end{split}
\end{equation*}
Bringing above estimate to inequality\/ \eqref{eq3.9} gets
\begin{align}
& \| U^{k} \|^2 + \frac{\tau \sigma}{T^{\alpha} \Gamma(1 - \alpha)} \sum_{j = 2}^{k} \| U^{j} \|^2
\nonumber \\
&~~ \leq \sigma E^1
+ \frac{2 \tau^{1 - \alpha} \sigma (4 - 3 \alpha) (k - 1 + \sigma)^{1 - \alpha}}{\Gamma(3 - \alpha)} \| U^{1} \|^2
+ \frac{2 \tau^{1 - \alpha} \sigma (k - 1 + \sigma)^{1 - \alpha}}{\Gamma(2 - \alpha)} \| U^{0} \|^2
\nonumber \\
&\qquad\qquad + 4 \tau \sigma \varepsilon_4 \sum_{j = 1}^{k - 1} \left( \sigma \| U^{j + 1} \| + (1 - \sigma) \| U^{j} \| \right)^2
+ \frac{\tau \sigma}{\varepsilon_4} \sum_{j = 1}^{k - 1} \| f^{j + \sigma} \|^2
\nonumber \\
&~~ \leq \sigma \left[ (4 \sigma^2 + 4 \sigma - 1) \| U^{1} \|^2 + 4 \sigma^2 \| U^{0} \|^2 \right]
+ \frac{2 \sigma (4 - 3 \alpha) T^{1 - \alpha}}{\Gamma(3 - \alpha)} \| U^{1} \|^2
\nonumber \\
&\qquad\qquad + \frac{2 \sigma T^{1 - \alpha}}{\Gamma(2 - \alpha)} \| U^{0} \|^2
+ 8 \tau \sigma \varepsilon_4 \sum_{j = 1}^{k} \| U^{j} \|^2
+ \frac{\tau \sigma}{\varepsilon_4} \sum_{j = 1}^{k - 1} \| f^{j + \sigma} \|^2, ~ \varepsilon_4 > 0,
\label{eq3.10}
\end{align}
where $E^1 \leq (4 \sigma^2 + 4 \sigma - 1) \| U^1 \|^2 + 4 \sigma^2 \| U^0 \|^2$.
Taking $\varepsilon_4 = \frac{1}{8 T^{\alpha} \Gamma(1 - \alpha)}$,  inequality\/ \eqref{eq3.10} leads to
\begin{align*}
\| U^{k} \|^2
& \leq
\left[ \frac{\tau \sigma}{T^{\alpha} \Gamma(1 - \alpha)} + \frac{2 \sigma  (4 - 3 \alpha) T^{1 - \alpha}}{\Gamma(3 - \alpha)}
 + 4 \sigma^3 + 4 \sigma^2 - \sigma \right] \| U^{1} \|^2 \\
&\qquad + \left( \frac{2 \sigma T^{1 - \alpha}}{\Gamma(2 - \alpha)}
+ 4 \sigma^3 \right) \| U^{0} \|^2 + 8 \tau \sigma T^{\alpha} \Gamma(1 - \alpha) \sum_{j = 1}^{k - 1} \| f^{j + \sigma} \|^2 \\
& \leq
\left[ \frac{T^{1 - \alpha} \sigma}{\Gamma(1 - \alpha)} + \frac{2 \sigma (4 - 3 \alpha) T^{1 - \alpha}}{\Gamma(3 - \alpha)}
 + 4 \sigma^3 + 4 \sigma^2 - \sigma \right] \| U^{1} \|^2 \\
&\qquad + \left( \frac{2 \sigma T^{1 - \alpha}}{\Gamma(2 - \alpha)}
+ 4 \sigma^3 \right) \| U^{0} \|^2 + 8 \sigma T^{1 + \alpha} \Gamma(1 - \alpha) \sum_{j = 1}^{k - 1} \| f^{j + \sigma} \|^2.
\end{align*}
Hence, the targeted results are immediately completed. \hfill $\Box$

Next, the convergence of\/ \eqref{eq3.5} is discussed.
% Theorem 3.2
\begin{theorem}
Assume $u(x, y, t)$ be the sufficiently smooth exact solution of\/ \eqref{eq3.1}-\eqref{eq3.4},
$\{ u_{i l}^j \mid x_i \in \hat{\omega}, ~0 \leq j \leq M \}$ be the solution
of the problem\/ \eqref{eq3.5}. Let
$\xi_{i l}^j = u(x_i, y_l, t_j) - u_{i l}^j  ~(0 \leq i \leq N_x, ~ 0 \leq l \leq N_y, ~ 0 \leq j \leq M)$.
Then, for $j = 0, 1, 2, \cdots, M$, we have
\begin{equation*}
\| \xi^j \| \leq C_3 (\tau^2 + h_{x}^2 + h_{y}^2), ~ 0 \leq j \leq M,
\end{equation*}
where $C_3$ is a positive constant, which may depend on $\alpha$ and $T$.
\label{th3.2}
\end{theorem}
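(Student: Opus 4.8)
The plan is to run the energy argument of Theorem~\ref{th3.1} on the error equation rather than on (\ref{eq3.5}) itself, with the local truncation error playing the part of the source $f$ and the initial error identically zero; this mirrors exactly what Theorem~\ref{th2.2} does for the one-dimensional scheme. First I would evaluate (\ref{eq3.1})--(\ref{eq3.4}) at the grid point $(x_i,y_l,t_{j+\sigma})$ and subtract (\ref{eq3.5}). Using Lemma~\ref{lemma2.1} for the fractional term, Lemma~\ref{lemma2.3} for $\delta_{\hat t}$, and the spatial consistency relation $\mathcal{N}u|_{(x_i,y_l,t_{j+\sigma})}=\sigma\tilde{\Lambda}u(x_i,y_l,t_{j+1})+(1-\sigma)\tilde{\Lambda}u(x_i,y_l,t_j)+\mathcal{O}(\tau^2+h_x^2+h_y^2)$ recorded in Section~\ref{sec3.1}, one obtains the error system
\begin{equation*}
\begin{cases}
\delta_{\hat t}\xi_{il}^{j}+\Delta^{\alpha}_{0,t_{j+\sigma}}\xi_{il}=\tilde{\Lambda}\xi_{il}^{j+\sigma}+R_{il}^{j+\sigma}, & 1\le i\le N_x-1,\ 1\le l\le N_y-1,\ 0\le j\le M-1,\\
\xi_{il}^{0}=0, & 0\le i\le N_x,\ 0\le l\le N_y,\\
\xi^{j}\in\hat{\mathcal{S}}, & 0\le j\le M,
\end{cases}
\end{equation*}
with $|R_{il}^{j+\sigma}|\le\tilde C(\tau^2+h_x^2+h_y^2)$, where $\tilde C$ depends only on $T$, on the coefficients $d,k,q$ and on bounds for the third-order derivatives of $u$ (the fractional part of the truncation is in fact $\mathcal{O}(\tau^{3-\alpha})$, hence dominated). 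Exactly as in Section~\ref{sec2}, at $j=0$ I set $\xi_{il}^{-1}:=\xi_{il}^{0}$ in accordance with $u_{il}^{-1}=u_{il}^{0}$ and the hypotheses $\partial_t u(x,y,0)=\partial_{tt}u(x,y,0)=0$ (which keep $R_{il}^{\sigma}=\mathcal{O}(\tau^2+h_x^2+h_y^2)$), so that $E^{0}=2\|\xi^{0}\|_{2D}^2=0$.

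Next I would take the $(\cdot,\cdot)_{2D}$ inner product of the error equation with $\xi^{j+\sigma}=\sigma\xi^{j+1}+(1-\sigma)\xi^{j}$. Because of the homogeneous boundary values and the positivity of $d$, $k$ together with $q\ge0$, the same summation-by-parts inequality $(\tilde{\Lambda}\xi^{j+\sigma},\xi^{j+\sigma})_{2D}\le0$ used in the proof of Theorem~\ref{th3.1} holds, and Lemmas~\ref{lemma2.4}--\ref{lemma2.5} then give the analogue of (\ref{eq3.8}),
\begin{equation*}
\frac{1}{4\tau}\big(E^{j+1}-E^{j}\big)+\frac12\,\Delta^{\alpha}_{0,t_{j+\sigma}}\|\xi\|_{2D}^2\le(R^{j+\sigma},\xi^{j+\sigma})_{2D}.
\end{equation*}
From here the two-step bookkeeping of Theorem~\ref{th3.1} carries over verbatim with $U\rightarrow\xi$, $f\rightarrow R$ and $\|\xi^{0}\|_{2D}=0$: Step~1 (the case $j=0$, using $E^{1}\ge\sigma^{-1}\|\xi^{1}\|_{2D}^2$ and Cauchy--Schwarz) yields $\|\xi^{1}\|_{2D}^2\le 4\sigma^{\alpha}T^{1+\alpha}\Gamma(2-\alpha)\|R^{\sigma}\|_{2D}^2$, while Step~2 (summing the last display over $j=1,\dots,k-1$ and applying Lemma~\ref{lemma2.2}) yields, for $k\ge2$,
\begin{equation*}
\|\xi^{k}\|_{2D}^2\le C_{1}\|\xi^{1}\|_{2D}^2+8\tau\sigma T^{\alpha}\Gamma(1-\alpha)\sum_{j=1}^{k-1}\|R^{j+\sigma}\|_{2D}^2 .
\end{equation*}

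The single point where care is needed---and the only genuine obstacle in this reduction---is that here one must \emph{not} replace $8\tau\sigma T^{\alpha}\Gamma(1-\alpha)$ by $8\sigma T^{1+\alpha}\Gamma(1-\alpha)$ via $\tau\le T$, as was harmless at the end of the proof of Theorem~\ref{th3.1} for a fixed source; the factor $\tau$ in front of the sum over $j$ is precisely what absorbs the $M\sim\tau^{-1}$ terms. Keeping it and using $k-1\le M$, $M\tau=T$ and the uniform bound $\|R^{j+\sigma}\|_{2D}^2\le L_xL_y\tilde C^2(\tau^2+h_x^2+h_y^2)^2$, one gets
\begin{equation*}
\tau\sum_{j=1}^{k-1}\|R^{j+\sigma}\|_{2D}^2\le T\,L_xL_y\,\tilde C^2(\tau^2+h_x^2+h_y^2)^2 ,\qquad
\|R^{\sigma}\|_{2D}^2\le L_xL_y\,\tilde C^2(\tau^2+h_x^2+h_y^2)^2 .
\end{equation*}
Substituting these into the two displayed estimates gives $\|\xi^{1}\|_{2D}^2\le 4\sigma^{\alpha}T^{1+\alpha}\Gamma(2-\alpha)L_xL_y\tilde C^2(\tau^2+h_x^2+h_y^2)^2$ and, for $k\ge2$,
\begin{equation*}
\|\xi^{k}\|_{2D}^2\le\big(4C_{1}\sigma^{\alpha}\Gamma(2-\alpha)+8\sigma\Gamma(1-\alpha)\big)\,T^{1+\alpha}L_xL_y\,\tilde C^2(\tau^2+h_x^2+h_y^2)^2 ,
\end{equation*}
while $k=0$ is trivial since $\xi^{0}=0$. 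Choosing $C_{3}$ as the square root of the largest of the coefficients above yields $\|\xi^{j}\|_{2D}\le C_{3}(\tau^2+h_x^2+h_y^2)$ for all $0\le j\le M$, with $C_{3}$ depending only on $\alpha$, $T$ and the fixed rectangle $[0,L_x]\times[0,L_y]$, which is the assertion. Aside from this bookkeeping subtlety, the only real labor is the routine consistency estimate producing the bound on $R_{il}^{j+\sigma}$; no idea beyond Section~\ref{sec2} and Theorem~\ref{th3.1} is required.
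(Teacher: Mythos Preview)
Your proposal is correct and follows exactly the paper's approach: subtract the scheme from the continuous problem to obtain the homogeneous-boundary error system with truncation $\tilde R_{il}^{j+\sigma}=\mathcal{O}(\tau^2+h_x^2+h_y^2)$, then rerun the energy argument of Theorem~\ref{th3.1} with $R$ in place of $f$ and $\xi^{0}=0$. Your explicit remark about keeping the factor $\tau$ (rather than bounding it by $T$) in front of $\sum_{j=1}^{k-1}\|R^{j+\sigma}\|_{2D}^2$ so that $\tau\cdot(k-1)\le T$ absorbs the number of terms is a detail the paper's two-line proof leaves implicit but is indeed what makes the estimate close.
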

% Proof
\textbf{Proof.}
Subtracting\/ \eqref{eq3.5} from\/ \eqref{eq3.1}-\eqref{eq3.4},
the error equations are
\begin{equation*}
\begin{cases}
\delta_{\hat{t}} \xi_{i l}^{j} + \Delta^{\alpha}_{0,t_{j + \sigma}} \xi_{i l}
= \tilde{\Lambda} \xi_{i l}^{j + \sigma}
 + \tilde{R}_{i l}^{j + \sigma},  & 1 \leq i \leq N_x - 1,~ 1 \leq l \leq N_y - 1, ~ 0 \leq j \leq M-1, \\
\xi_{i l}^{-1} = \xi_{i l}^{0} = 0,  & 0 \leq i \leq N_x, ~ 0 \leq l \leq N_y, \\
\xi_{0 l}^{j} = \xi_{N_x l}^{j} = 0,  & 1 \leq l \leq N_y, ~ 0 \leq j \leq M, \\
\xi_{i 0}^{j} = \xi_{i N_y}^{j} = 0, & 0 \leq i \leq N_x, ~ 0 \leq j \leq M,
\end{cases}
\end{equation*}
with $\tilde{R}_{i l} = \mathcal{O}(\tau^2 + h_{x}^2 + h_{y}^2)$.
After that, the following procedure is similar to Theorem \ref{th3.1}, and the error $\xi^j$ yields
\begin{equation*}
\| \xi^j \|_{2D} \leq C_3 (\tau^2 + h_{x}^2 + h_{y}^2), ~ 0 \leq j \leq M,
\end{equation*}
where $C_3$ is a positive constant, which may depend on $\alpha$ and $T$.
\hfill $\Box$

Similar to the 1D case, if the solution of Eq. \eqref{eq3.1} is non-smooth,
several alternative approaches \cite{zeng2017nonsmooth,liao2018nonsmooth} can be used to address this problem.
Some additional symbols are needed for presentation of the equivalent matrix form of the IDS\/ \eqref{eq3.5}.
\begin{equation*}
\hat{U}_{l}^{j} = [u_{1 l}^{j}, u_{2 l}^{j}, \cdots, u_{N_x - 1, l}^{j}], \quad
\tilde{U}^{j} = [\hat{U}_{1}^{j}, \hat{U}_{2}^{j}, \cdots, \hat{U}_{N_y - 1}^{j}]^{T},
\end{equation*}
\begin{equation*}
\hat{u}_{l} = \left[ u_{0}(x_1, y_l), u_{0}(x_2, y_l), \cdots, u_{0}(x_{N_{x} - 2}, y_l), u_{0}(x_{N_{x} - 1}, y_l) \right]^{T},
\end{equation*}
\begin{equation*}
\hat{f}_{l}^{j + \sigma} = \left[ f_{1 l}^{j + \sigma}, f_{2 l}^{j + \sigma}, \cdots, f_{N_x - 1, l}^{j + \sigma} \right]^{T}, \quad
\tilde{f}^{j + \sigma} = \left[ \hat{f}_{1}^{j + \sigma}, \hat{f}_{2}^{j + \sigma}, \cdots, \hat{f}_{N_y - 1}^{j + \sigma} \right]^{T},
\end{equation*}
\begin{align*}
\hat{A}_{l}^{j + \sigma} = & -\textrm{diag} \left( \left[ (d_{1 - \frac{1}{2}, l}^{j + \sigma} + d_{1 + \frac{1}{2}, l}^{j + \sigma}),
(d_{2 - \frac{1}{2}, l}^{j + \sigma} + d_{2 + \frac{1}{2}, l}^{j + \sigma}), \cdots,
(d_{N_{x} - 1 - \frac{1}{2}, l}^{j + \sigma}+ d_{N_{x} - 1 + \frac{1}{2}, l}^{j + \sigma}) \right] \right) \\
& + \textrm{diag} \left( \left[ d_{2 - \frac{1}{2}, l}^{j + \sigma}, d_{3 - \frac{1}{2}, l}^{j + \sigma}, \cdots,
d_{N_{x} - 1 - \frac{1}{2}, l}^{j + \sigma} \right], -1 \right)
+ \textrm{diag} \left( \left[ d_{1 + \frac{1}{2}, l}^{j + \sigma}, d_{2 + \frac{1}{2}, l}^{j + \sigma}, \cdots,
d_{N_{x} - 2 + \frac{1}{2}, l}^{j + \sigma} \right],1 \right),
\end{align*}
\begin{equation*}
\bar{B}_{l}^{j + \sigma} = - \textrm{diag} \left( (k_{1,l + 1 - \frac{1}{2}}^{j + \sigma} + k_{1,l - \frac{1}{2}}^{j + \sigma}),
(k_{2,l + 1 - \frac{1}{2}}^{j + \sigma} + k_{2,l - \frac{1}{2}}^{j + \sigma}), \cdots,
(k_{N_{x} - 1,l + 1 - \frac{1}{2}}^{j + \sigma} + k_{N_{x} - 1,l - \frac{1}{2}}^{j + \sigma}) \right),
\end{equation*}
\begin{equation*}
\hat{B}_{s}^{j + \sigma} = \textrm{diag} \left( k_{1, s - \frac{1}{2}}^{j + \sigma}, k_{2, s - \frac{1}{2}}^{j + \sigma},
\cdots, k_{N_{x} - 1, s - \frac{1}{2}}^{j + \sigma} \right), \quad
\hat{Q}_{l}^{j + \sigma} = \textrm{diag} \left( q_{1,l}^{j + \sigma}, q_{2,l}^{j + \sigma},
\cdots, q_{N_{x} - 1,l}^{j + \sigma} \right),
\end{equation*}
\begin{equation*}
\hat{\xi}_{l}^{j + \sigma} = \left[
d_{1 - \frac{1}{2},1}^{j + \sigma} \left[ \sigma u_{0, l}^{j + 1} + (1 - \sigma) u_{0, l}^{j} \right],
0, \cdots, 0,
d_{N_{x} - \frac{1}{2},1}^{j + \sigma} \left[ \sigma u_{N_{x}, l}^{j + 1} + (1 - \sigma) u_{N_{x}, l}^{j} \right]
\right]_{(N_{x} - 1) \times 1}^{T}
\end{equation*}
and
\begin{equation*}
v_1^{j + \sigma} = \begin{bmatrix}
k_{1, 1 - \frac{1}{2}}^{j + \sigma} \left[ \sigma u_{1,0}^{j + 1} + (1 - \sigma) u_{1,0}^{j} \right] \\
k_{2, 1 - \frac{1}{2}}^{j + \sigma} \left[ \sigma u_{2,0}^{j + 1} + (1 - \sigma) u_{2,0}^{j} \right] \\
\vdots \\
k_{N_{x} - 1, 1 - \frac{1}{2}}^{j + \sigma} \left[ \sigma u_{N_{x} - 1,0}^{j + 1} + (1 - \sigma) u_{N_{x} - 1,0}^{j} \right]
\end{bmatrix},
\end{equation*}
\begin{equation*}
v_2^{j + \sigma} = \begin{bmatrix}
k_{1, N_{y} - \frac{1}{2}}^{j + \sigma} \left[ \sigma u_{1, N_{y}}^{j + 1} + (1 - \sigma) u_{1, N_{y}}^{j} \right] \\
k_{2, N_{y} - \frac{1}{2}}^{j + \sigma} \left[ \sigma u_{2, N_{y}}^{j + 1} + (1 - \sigma) u_{2, N_{y}}^{j} \right] \\
\vdots \\
k_{N_{x} - 1, N_{y} - \frac{1}{2}}^{j + \sigma} \left[ \sigma u_{N_{x} - 1, N_{y}}^{j + 1}
+ (1 - \sigma) u_{N_{x} - 1, N_{y}}^{j} \right]
\end{bmatrix}.
\end{equation*}

Finally, the equivalent matrix form of\/ \eqref{eq3.5} below is derived to complete this section.
\begin{equation}
\begin{cases}
\begin{split}
\mathcal{S}^{j + 1} \tilde{U}^{j + 1} = P^{j} \tilde{U}^{j} & - (2 \sigma - 1) h_{x}^2 h_{y}^2 \tilde{U}^{j - 1}
- \frac{2 \tau^{1 - \alpha} h_{x}^2 h_{y}^2}{\Gamma(2 - \alpha)}
\sum_{s = 0}^{j - 1} c_{j - s}^{(\alpha, \sigma)} \left( \tilde{U}^{s + 1} - \tilde{U}^{s} \right) \\
& + 2 \tau h_{x}^2 h_{y}^2 \tilde{f}^{j + \sigma} + 2 \tau h_{x}^2 h_{y}^2 (\tilde{\xi}_1^{j + \sigma} + \tilde{\xi}_2^{j + \sigma}),
\quad 0 \leq j \leq M - 1,
\end{split} \\
\tilde{U}^{0} = \tilde{u}_0,
\end{cases}
\label{eq3.11}
\end{equation}
in which
\begin{equation*}
\mathcal{S}^{j + 1} = \left[ (2 \sigma + 1) h_{x}^2 h_{y}^2
+ \frac{2 \tau^{1 - \alpha} h_{x}^2 h_{y}^2}{\Gamma(2 - \alpha)} c_{0}^{(\alpha, \sigma)} \right] I
- 2 \tau \sigma \left( h_{y}^2 \tilde{A}^{j + \sigma} + h_{x}^2 \tilde{B}^{j + \sigma}
- h_{x}^2 h_{y}^2 \tilde{Q}^{j + \sigma} \right),
\end{equation*}
\begin{equation*}
P^{j} = \left[ 4 \sigma h_{x}^2 h_{y}^2
+ \frac{2 \tau^{1 - \alpha} h_{x}^2 h_{y}^2}{\Gamma(2 - \alpha)} c_{0}^{(\alpha, \sigma)} \right] I
+ 2 \tau (1 - \sigma) \left( h_{y}^2 \tilde{A}^{j + \sigma} + h_{x}^2 \tilde{B}^{j + \sigma}
- h_{x}^2 h_{y}^2 \tilde{Q}^{j + \sigma} \right)
\end{equation*}
and
\begin{equation*}
\tilde{u}_0 = \begin{bmatrix}
\hat{u}_1 \\
\hat{u}_2 \\
\vdots \\
\hat{u}_{N_{y} - 2} \\
\hat{u}_{N_{y} - 1}
\end{bmatrix}, \quad
\tilde{\xi}_1^{j + \sigma} = \begin{bmatrix}
\hat{\xi}_1 \\
\hat{\xi}_2 \\
\vdots \\
\hat{\xi}_{N_{y} - 2} \\
\hat{\xi}_{N_{y} - 1}
\end{bmatrix}, \quad
\tilde{\xi}_2^{j + \sigma} = \begin{bmatrix}
v_1^{j + \sigma} \\
0 \\
\vdots \\
0 \\
v_2^{j + \sigma}
\end{bmatrix}_{(N_{x} - 1) \times (N_{y} - 1)},
\end{equation*}
whereas
\begin{equation*}
\tilde{A}^{j + \sigma} = \textrm{diag} \left( \hat{A}_{1}^{j + \sigma}, \hat{A}_{2}^{j + \sigma}, \cdots,
\hat{A}_{N_{y} - 1}^{j + \sigma} \right), ~
\tilde{Q}^{j + \sigma} = \textrm{diag} \left( \hat{Q}_{1}^{j + \sigma}, \hat{Q}_{2}^{j + \sigma}, \cdots,
\hat{Q}_{N_{y} - 1}^{j + \sigma} \right),
\end{equation*}
\begin{align*}
\tilde{B}^{j + \sigma} = & \textrm{diag} \left( \left[ \bar{B}_{1}^{j + \sigma}, \bar{B}_{2}^{j + \sigma}, \cdots,
\bar{B}_{N_{y} - 1}^{j + \sigma} \right] \right) +
\textrm{diag} \left( \left[ \hat{B}_{2}^{j + \sigma}, \hat{B}_{3}^{j + \sigma},
 \cdots, \hat{B}_{N_{y} - 1}^{j + \sigma} \right], -1 \right) \\
&\quad + \textrm{diag} \left( \left[ \hat{B}_{2}^{j + \sigma}, \hat{B}_{3}^{j + \sigma},
\cdots, \hat{B}_{N_{y} - 1}^{j + \sigma} \right], 1 \right).
\end{align*}
Investigation on the expression of $\mathcal{S}^{j + 1}$,
it can be found that the coefficient matrix $\mathcal{S}^{j + 1}$
is a large sparse banded symmetric matrix.
For the sake of clarity, Fig. \ref{fig1} is an example of $\mathcal{S}^{j + 1}$ corresponding to
$h_{x} = h_{y} = \frac{1}{8}$ and $\tau = \frac{1}{5}$.
\begin{figure}[H]
\centering
\includegraphics[width=2.8in,height=2.3in]{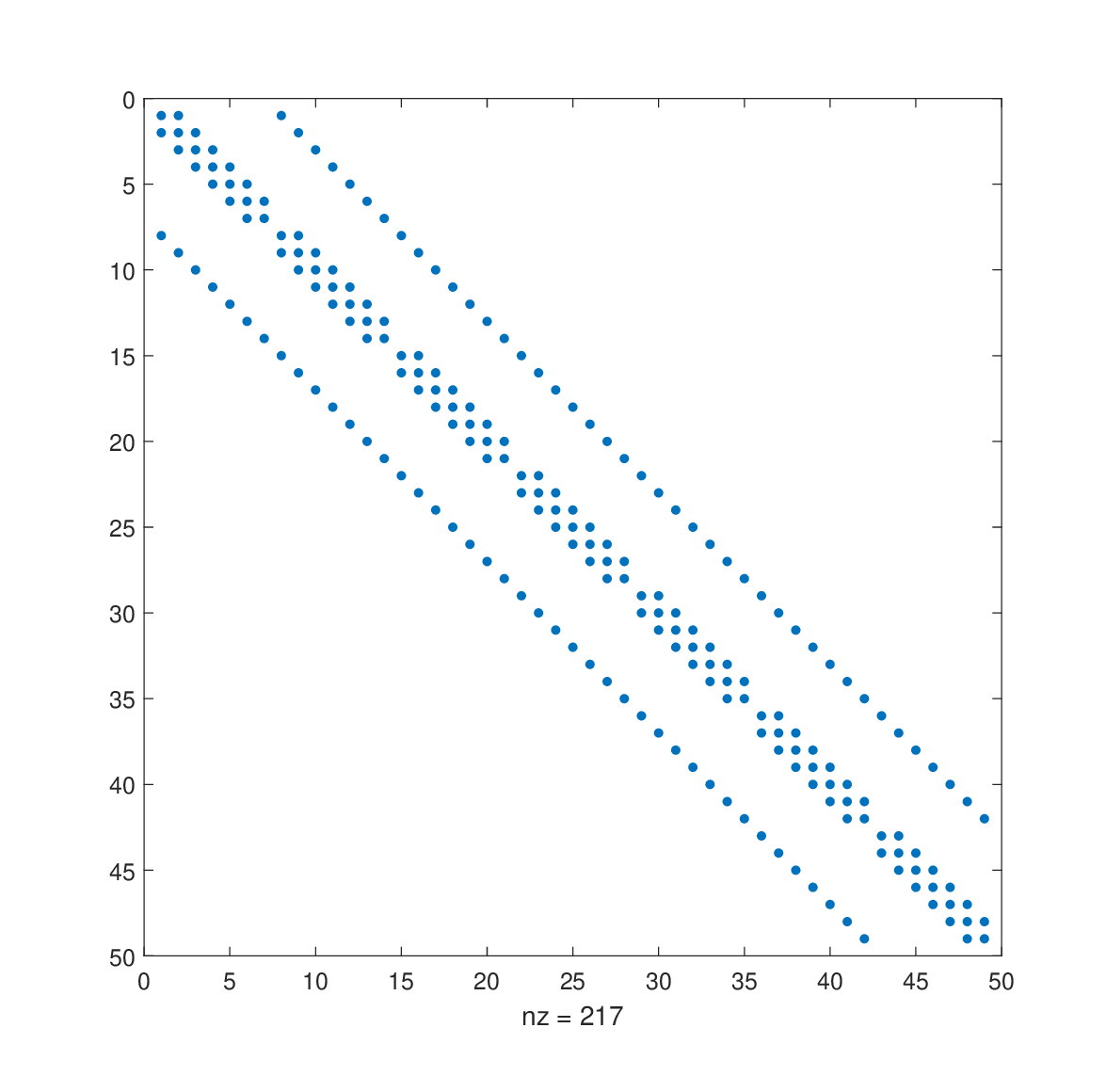}
\includegraphics[width=2.8in,height=2.3in]{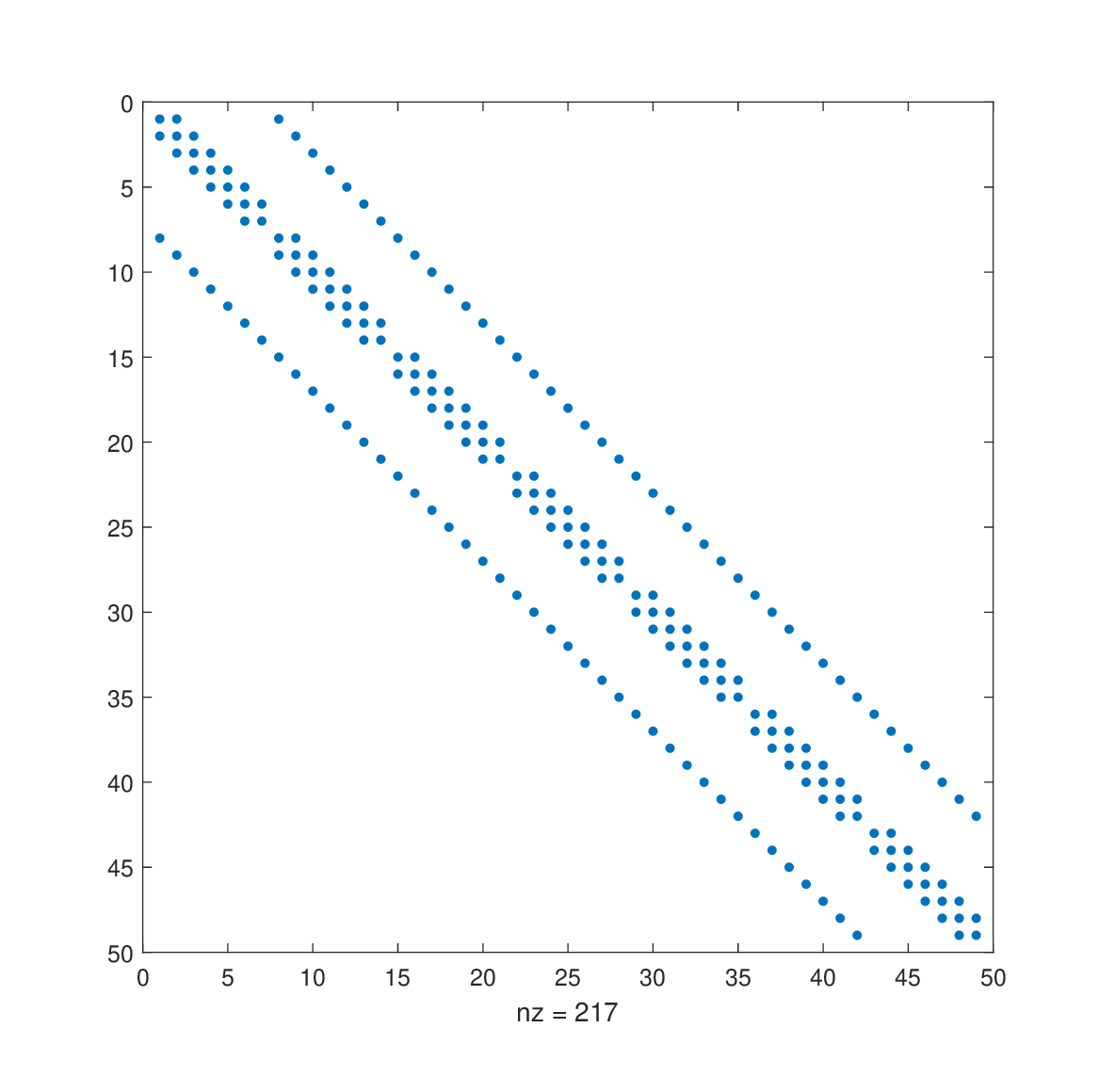}
\caption{Sparsity pattern of $\mathcal{S}^{j + 1}$ with $h_{x} = h_{y} = \frac{1}{8}$
and $\tau = \frac{1}{5}$. Left: $j = 0$; Right: $j = 1$.}
\label{fig1}
\end{figure}

\section{Numerical results}
\label{sec4}

Numerical results are provided to validate the error estimates
obtained in Theorems \ref{th2.2} and \ref{th3.2}
for the proposed difference schemes\/ \eqref{eq2.3} and\/ \eqref{eq3.5}, respectively,
which are given in Examples 1-2.
Moreover, the method proposed in \cite{gao2017second} (denote as Gao's method)
is employed to solve\/ \eqref{eq1.1}, and the corresponding errors also reported in Examples 1-2.
In Example 3, three preconditioned iterative methods are employed for solving the linear system of the
two-dimensional case.
For simplicity, when test Examples 2, we take $h_{x} = h_{y} = h$ in this manuscript, and
let
\begin{equation*}
Error_1(h, \tau) = \max_{0 \leq j \leq M} \| e^{j} \|_\infty, \quad
Error_2(h, \tau) = \max_{0 \leq j \leq M} \| e^{j} \|,
\end{equation*}
\begin{equation*}
Error_3(h, \tau) = \max_{0 \leq j \leq M} \| \xi^{j} \|_\infty, \quad
Error_4(h, \tau) = \max_{0 \leq j \leq M} \| \xi^{j} \|,
\end{equation*}
\begin{equation*}
rate1_{\tau} =\log_{\tau_1/ \tau_2} \frac{Error_1(h, \tau_1)}{Error_1(h, \tau_2)}, \quad
rate1_{h} =\log_{h_1/ h_2} \frac{Error_1(h_1, \tau)}{Error_1(h_2, \tau)},
\end{equation*}
\begin{equation*}
rate2_{\tau} =\log_{\tau_1/ \tau_2} \frac{Error_2(h, \tau_1)}{Error_2(h, \tau_2)}, \quad
rate2_{h} =\log_{h_1/ h_2} \frac{Error_2(h_1, \tau)}{Error_2(h_2, \tau)},
\end{equation*}
\begin{equation*}
rate3_{\tau} =\log_{\tau_1/ \tau_2} \frac{Error_3(h, \tau_1)}{Error_3(h, \tau_2)}, \quad
rate3_{h} =\log_{h_1/ h_2} \frac{Error_3(h_1, \tau)}{Error_3(h_2, \tau)},
\end{equation*}
\begin{equation*}
rate4_{\tau} =\log_{\tau_1/ \tau_2} \frac{Error_4(h, \tau_1)}{Error_4(h, \tau_2)}, \quad
rate4_{h} =\log_{h_1/ h_2} \frac{Error_4(h_1, \tau)}{Error_4(h_2, \tau)}.
\end{equation*}

All experiments were performed on a Windows 10 (64 bit) desktop-Intel(R) Xeon(R) E5504
CPU 2.00GHz 2.00GHz (two processors), 48GB of RAM using MATLAB R2015b.
% Table 1
\begin{table}[H]\footnotesize \tabcolsep=3.5pt
\caption{$L_2$-norm and maximum norm errors and convergence orders for Example 1 where $h = 1/2000$.}
\centering
\begin{tabular}{cccccccccc}
\hline
& & \multicolumn{4}{c}{Our method} & \multicolumn{4}{c}{Gao's method} \\
[-2pt] \cmidrule(lr){3-6} \cmidrule(lr){7-10} \\ [-10pt]
$\alpha$ & $\tau$ & $Error_1(h, \tau)$ & $rate1_{\tau}$ & $Error_2(h, \tau)$ & $rate2_{\tau}$
& $Error_1(h, \tau)$ & $rate1_{\tau}$ & $Error_2(h, \tau)$ & $rate2_{\tau}$ \\
\hline
0.10 & 1/8 & 6.9433e-05 & -- & 5.0972e-05 & -- & 7.0252e-04 & -- & 4.4325e-04 & -- \\
     & 1/16 & 1.8594e-05 & 1.9007 & 1.3514e-05 & 1.9152 & 1.7601e-04 & 1.9969 & 1.1105e-04 & 1.9969 \\
     & 1/32 & 4.7487e-06 & 1.9693 & 3.4269e-06 & 1.9795 & 4.3992e-05 & 2.0003 & 2.7752e-05 & 2.0005 \\
     & 1/64 & 1.1958e-06 & 1.9896 & 8.6184e-07 & 1.9914 & 1.0966e-05 & 2.0042 & 6.9152e-06 & 2.0047 \\
     & 1/128 & 3.0034e-07 & 1.9933 & 2.1639e-07 & 1.9938 & 2.7094e-06 & 2.0170 & 1.7057e-06 & 2.0194 \\
\hline
0.50  & 1/8 & 5.5452e-04 & -- & 3.4264e-04 & -- & 9.0968e-04 & -- & 5.7256e-04 & -- \\
     & 1/16 & 1.4046e-04 & 1.9810 & 8.6759e-05 & 1.9816 & 2.2859e-04 & 1.9926 & 1.4385e-04 & 1.9929 \\
     & 1/32 & 3.5268e-05 & 1.9938 & 2.1774e-05 & 1.9944 & 5.7192e-05 & 1.9989 & 3.5981e-05 & 1.9993 \\
     & 1/64 & 8.8017e-06 & 2.0025 & 5.4299e-06 & 2.0036 & 1.4267e-05 & 2.0031 & 8.9715e-06 & 2.0038 \\
     & 1/128 & 2.1711e-06 & 2.0194 & 1.3362e-06 & 2.0227 & 3.5351e-06 & 2.0129 & 2.2201e-06 & 2.0147 \\
\hline
0.90  & 1/8 & 1.1057e-03 & -- & 6.9253e-04 & -- & 1.1521e-03 & -- & 7.2351e-04 & -- \\
     & 1/16 & 2.7756e-04 & 1.9941 & 1.7381e-04 & 1.9944 & 2.8877e-04 & 1.9963 & 1.8131e-04 & 1.9966 \\
     & 1/32 & 6.9391e-05 & 2.0000 & 4.3440e-05 & 2.0004 & 7.2144e-05 & 2.0010 & 4.5288e-05 & 2.0013 \\
     & 1/64 & 1.7304e-05 & 2.0036 & 1.0828e-05 & 2.0043 & 1.7987e-05 & 2.0039 & 1.1286e-05 & 2.0046 \\
     & 1/128 & 4.2930e-06 & 2.0110 & 2.6833e-06 & 2.0127 & 4.4632e-06 & 2.0108 & 2.7976e-06 & 2.0123 \\
\hline
0.99  & 1/8 & 1.2116e-03 & -- & 7.6041e-04 & -- & 1.2156e-03 & -- & 7.6310e-04 & -- \\
     & 1/16 & 3.0380e-04 & 1.9958 &1.9066e-04  & 1.9958 & 3.0475e-04 & 1.9960 & 1.9131e-04 & 1.9960 \\
     & 1/32 & 7.5972e-05 & 1.9996 & 4.7675e-05 & 1.9997 & 7.6204e-05 & 1.9997 & 4.7834e-05 & 1.9998 \\
     & 1/64 & 1.8967e-05 & 2.0020 & 1.1900e-05 & 2.0023 & 1.9024e-05 & 2.0020 & 1.1939e-05 & 2.0024 \\
     & 1/128 & 4.7154e-06 & 2.0081 & 2.9559e-06 & 2.0093 & 4.7296e-06 & 2.0080 & 2.9656e-06 & 2.0093 \\
\hline
\end{tabular}
\label{tab1}
\end{table}

\subsection{The 1D case}
\label{sec4.1}
At first, the 1D TFRDE with zero boundary condition is considered.

\noindent{\textbf{Example 1.}} In this example, we consider the Eq. \eqref{eq1.1}
on space interval $[0, L] = [0, 1]$ and time interval $[0, T] = [0, 1]$ with
the coefficients $k(x,t) = x \exp(-t) + 1$, $q(x,t) = t^2 \cos(x)$, and the source term
\begin{align*}
f(x,t) = & x^2 (1 - x)^2 \left[ (3 + \alpha) t^{2 + \alpha}
+ \frac{\Gamma(4 + \alpha)}{\Gamma(4)} t^3 \right]
- t^{3 + \alpha} \Big\{ \Big[ \exp(-t) \left( 16 x^3 - 18 x^2 + 4 x \right) \\
&\quad - \left( 12 x^2 - 12 x + 2 \right) \Big]
- t^2 \cos(x) x^2 (1 - x)^2 \Big\}.
\end{align*}
For the above values, the exact solution is $u(x, t) = t^{3 + \alpha} x^2 (1 - x)^2$.

Firstly, fixing the spatial step $h = 1/2000$ and taking different temporal steps.
Table \ref{tab1} displays the maximum norm errors, $L_2$-norm errors and temporal convergence orders
of the IDS\/ \eqref{eq2.3} for $\alpha= 0.1, 0.5, 0.9, 0.99$. It shows that the convergence order of the scheme
in temporal direction is $\mathcal{O}(\tau^2)$.
It is in accord with the theoretical result in Section \ref{sec2.2}.
Although the temporal convergence orders of the proposed method are smaller than the Gao's method,
the errors of the proposed method are slightly better than the Gao's method.
Afterwards, we investigate the spatial convergence rate for a fixed temporal step size $\tau = h$.
Table \ref{tab2} lists the maximum norm errors, $L_2$-norm errors
and spatial convergence rates of the scheme\/ \eqref{eq2.3}.
From Table \ref{tab2}, the errors of the Gao's method are smaller than our method.
However, the spatial convergence orders of our method are slightly better than the Gao's method.
As predicted by the theoretical estimates, the temporal and spatial approximation orders
of our proposed scheme\/ \eqref{eq2.3} are close to 2, i.e.,
the slopes of the error curves in Fig. \ref{fig2} is 2, for $\alpha= 0.1, 0.5, 0.9, 0.99$.
% Figure 2
\begin{figure}[t]
\centering
\includegraphics[width=2.4in,height=2.4in]{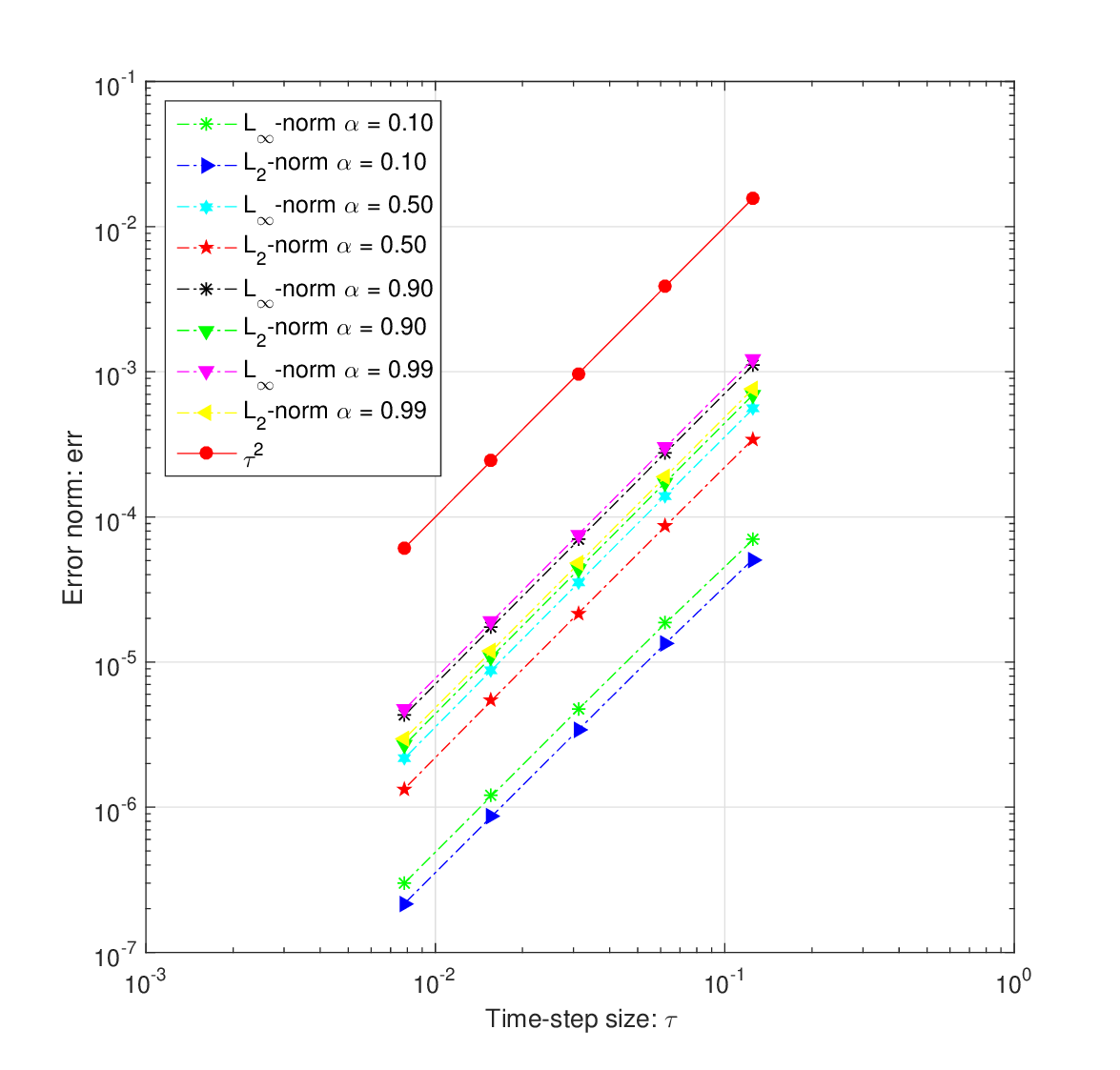}
\includegraphics[width=2.4in,height=2.4in]{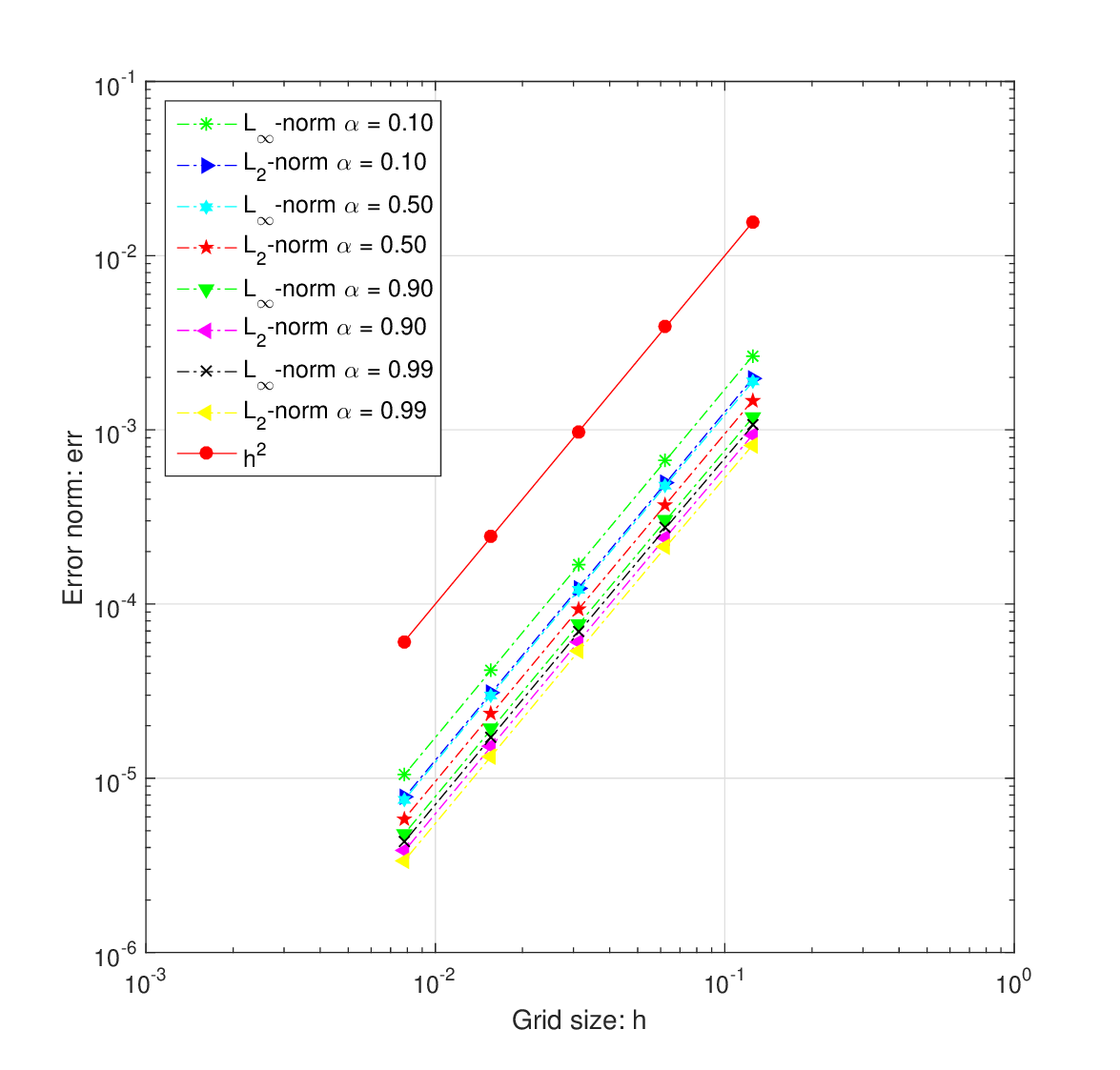}
\caption{Comparison the order of accuracy obtained by our proposed IDS for Example 1 in
space and time variables. Left: time direction; Right: space direction.}
\label{fig2}
\end{figure}
% Table 2
\begin{table}[!tbp]\footnotesize \tabcolsep=3.5pt
\caption{$L_2$-norm and maximum norm errors and convergence orders for Example 1 where $\tau = h$.}
\centering
\begin{tabular}{cccccccccc}
\hline
& & \multicolumn{4}{c}{Our method} & \multicolumn{4}{c}{Gao's method} \\
[-2pt] \cmidrule(lr){3-6} \cmidrule(lr){7-10} \\ [-10pt]
$\alpha$ & $h$ & $Error_1(h, \tau)$ & $rate1_{h}$ & $Error_2(h, \tau)$ & $rate2_{h}$
& $Error_1(h, \tau)$ & $rate1_{h}$ & $Error_2(h, \tau)$ & $rate2_{h}$ \\
\hline
0.10 & 1/8 & 2.6665e-03 & -- & 1.9792e-03 & -- & 1.9327e-03 & -- & 1.5130e-03 & -- \\
     & 1/16 & 6.6754e-04 & 1.9980 & 4.9576e-04 & 1.9972 & 4.9389e-04 & 1.9684 & 3.8359e-04 & 1.9798 \\
     & 1/32 & 1.6749e-04 & 1.9948 & 1.2398e-04 & 1.9995 & 1.2390e-04 & 1.9950 & 9.6228e-05 & 1.9950 \\
     & 1/64 & 4.1873e-05 & 2.0000 & 3.0997e-05 & 2.0000 & 3.1003e-05 & 1.9987 & 2.4078e-05 & 1.9987 \\
     & 1/128 & 1.0468e-05 & 2.0001 & 7.7491e-06 & 2.0000 & 7.7532e-06 & 1.9995 & 6.0210e-06 & 1.9996 \\
\hline
0.50 & 1/8 & 1.8911e-03 & -- & 1.4793e-03 & -- & 1.5625e-03 & -- & 1.2449e-03 & -- \\
     & 1/16 & 4.8047e-04 & 1.9767 & 3.7325e-04 & 1.9867 & 3.9788e-04 & 1.9735 & 3.1726e-04 & 1.9723 \\
     & 1/32 & 1.2030e-04 & 1.9978 & 9.3494e-05 & 1.9972 & 9.9931e-05 & 1.9933 & 7.9693e-05 & 1.9931 \\
     & 1/64 & 3.0083e-05 & 1.9996 & 2.3385e-05 & 1.9993 & 2.5017e-05 & 1.9980 & 1.9952e-05 & 1.9979  \\
     & 1/128 & 7.5230e-06 & 1.9996 & 5.8475e-06 & 1.9997 & 6.2582e-06 & 1.9991 & 4.9911e-06 & 1.9991 \\
\hline
0.90 & 1/8 & 1.1794e-03 & -- &9.3874e-04  & -- & 1.1490e-03 & -- & 9.0751e-04 & -- \\
     & 1/16 &3.0221e-04  &1.9645  & 2.4153e-04 & 1.9585 & 2.9512e-04 & 1.9610 & 2.3424e-04 & 1.9539 \\
     & 1/32 & 7.6410e-05 & 1.9837 & 6.0837e-05 & 1.9892 & 7.4468e-05 & 1.9866 & 5.9059e-05 & 1.9878 \\
     & 1/64 & 1.9150e-05 & 1.9964 & 1.5251e-05 & 1.9960 & 1.8680e-05 & 1.9951 & 1.4811e-05 & 1.9955 \\
     & 1/128 & 4.7945e-06 & 1.9979 & 3.8192e-06 & 1.9976 & 4.6776e-06 & 1.9977 & 3.7094e-06 & 1.9974 \\
\hline
0.99 & 1/8 & 1.0616e-03 & -- & 8.2033e-04 & -- & 1.0589e-03 & -- & 8.1763e-04 & -- \\
     & 1/16 & 2.7337e-04 & 1.9573 &2.1251e-04  & 1.9487 & 2.7275e-04 & 1.9569 & 2.1188e-04 & 1.9482 \\
     & 1/32 & 6.8854e-05 & 1.9892 & 5.3586e-05 & 1.9876 & 6.8703e-05 & 1.9891 & 5.3433e-05 & 1.9874 \\
     & 1/64 & 1.7248e-05 & 1.9971 &1.3427e-05  & 1.9967 & 1.7211e-05 & 1.9970 & 1.3389e-05 & 1.9967 \\
     & 1/128 & 4.3148e-06 & 1.9991 & 3.3594e-06 & 1.9989 & 4.3055e-06 & 1.9991 & 3.3500e-06 & 1.9988 \\
\hline
\end{tabular}
\label{tab2}
\end{table}

\begin{table}[th]\footnotesize \tabcolsep=3.5pt
\caption{$L_2$-norm and maximum norm errors and convergence orders
for Example 2 where $h_{x} = h_{y} = 1/1000$.}
\centering
\begin{tabular}{cccccccccc}
\hline
& & \multicolumn{4}{c}{Our method} & \multicolumn{4}{c}{Gao's method} \\
[-2pt] \cmidrule(lr){3-6} \cmidrule(lr){7-10} \\ [-10pt]
$\alpha$ & $\tau$ & $Error_3(h, \tau)$ & $rate3_{\tau}$ & $Error_4(h, \tau)$ & $rate4_{\tau}$
& $Error_3(h, \tau)$ & $rate3_{\tau}$ & $Error_4(h, \tau)$ & $rate4_{\tau}$ \\
\hline
0.10 & 1/5 & 1.1189e-05 & -- & 3.6846e-06 & -- & 2.2674e-04 & -- & 9.1250e-05 & -- \\
     & 1/10 & 2.7763e-06 & 2.0109 & 9.2335e-07 & 1.9966 & 5.7608e-05 & 1.9767 & 2.3175e-05 & 1.9773 \\
     & 1/20 & 6.7644e-07 & 2.0371 & 2.2821e-07 & 2.0166 & 1.4443e-05 & 1.9959 & 5.8084e-06 & 1.9964 \\
     & 1/40 & 1.5089e-07 & 2.1645 & 5.3935e-08 & 2.0810 & 3.5961e-06 & 2.0059 & 1.4444e-06 & 2.0077 \\
     & 1/80 & 3.1178e-08 & 2.2749 & 1.7958e-08 & 1.5866 & 8.8094e-07 & 2.0293 & 3.5206e-07 & 2.0366 \\
\hline
0.50 & 1/5 & 1.7332e-04 & -- &6.8422e-05  & -- & 2.7098e-04 & -- & 1.0877e-04 & -- \\
     & 1/10 &4.5279e-05  & 1.9365 & 1.7852e-05 & 1.9384 & 6.9498e-05 & 1.9631 & 2.7890e-05 & 1.9635 \\
     & 1/20 & 1.1519e-05 & 1.9748 & 4.5365e-06 & 1.9764 & 1.7501e-05 & 1.9895 & 7.0205e-06 & 1.9901 \\
     & 1/40 & 2.8857e-06 & 1.9970 & 1.1342e-06 & 2.0000 & 4.3687e-06 & 2.0022 & 1.7504e-06 & 2.0039 \\
     & 1/80 & 7.0598e-07 & 2.0313 & 2.7556e-07 & 2.0412 & 1.0748e-06 & 2.0231 & 4.2883e-07 & 2.0292 \\
\hline
0.90 & 1/5 & 3.1489e-04 & -- & 1.2578e-04 & -- & 3.2578e-04 & -- & 1.3042e-04 & -- \\
     & 1/10 & 8.0729e-05 & 1.9637 & 3.2235e-05 & 1.9642 & 8.3188e-05 & 1.9695 & 3.3299e-05 & 1.9696 \\
     & 1/20 & 2.0319e-05 & 1.9902 & 8.1098e-06 & 1.9909 & 2.0897e-05 & 1.9931 & 8.3617e-06 & 1.9936 \\
     & 1/40 & 5.0730e-06 & 2.0019 & 2.0226e-06 & 2.0035 & 5.2126e-06 & 2.0032 & 2.0837e-06 & 2.0046 \\
     & 1/80 & 1.2509e-06 & 2.0199 & 4.9687e-07 & 2.0253 & 1.2852e-06 & 2.0200 & 5.1193e-07 & 2.0251 \\
\hline
0.99 & 1/5 & 3.3967e-04 & -- & 1.3587e-04 & -- & 3.4054e-04 & -- & 1.3625e-04 & -- \\
     & 1/10 & 8.6609e-05 & 1.9715 & 3.4643e-05 & 1.9716 & 8.6796e-05 & 1.9721 & 3.4726e-05 & 1.9722 \\
     & 1/20 & 2.1744e-05 & 1.9939 & 8.6957e-06 & 1.9942 & 2.1787e-05 & 1.9942 & 8.7149e-06 & 1.9945 \\
     & 1/40 & 5.4253e-06 & 2.0028 & 2.1678e-06 & 2.0040 & 5.4355e-06 & 2.0030 & 2.1725e-06 & 2.0041 \\
     & 1/80 & 1.3392e-06 &  2.0184& 5.3335e-07 & 2.0231 & 1.3416e-06 & 2.0185 & 5.3448e-07 & 2.0231 \\
\hline
\end{tabular}
\label{tab5}
\end{table}
% Figure 4
\begin{figure}[t]
\centering
\includegraphics[width=2.4in,height=2.4in]{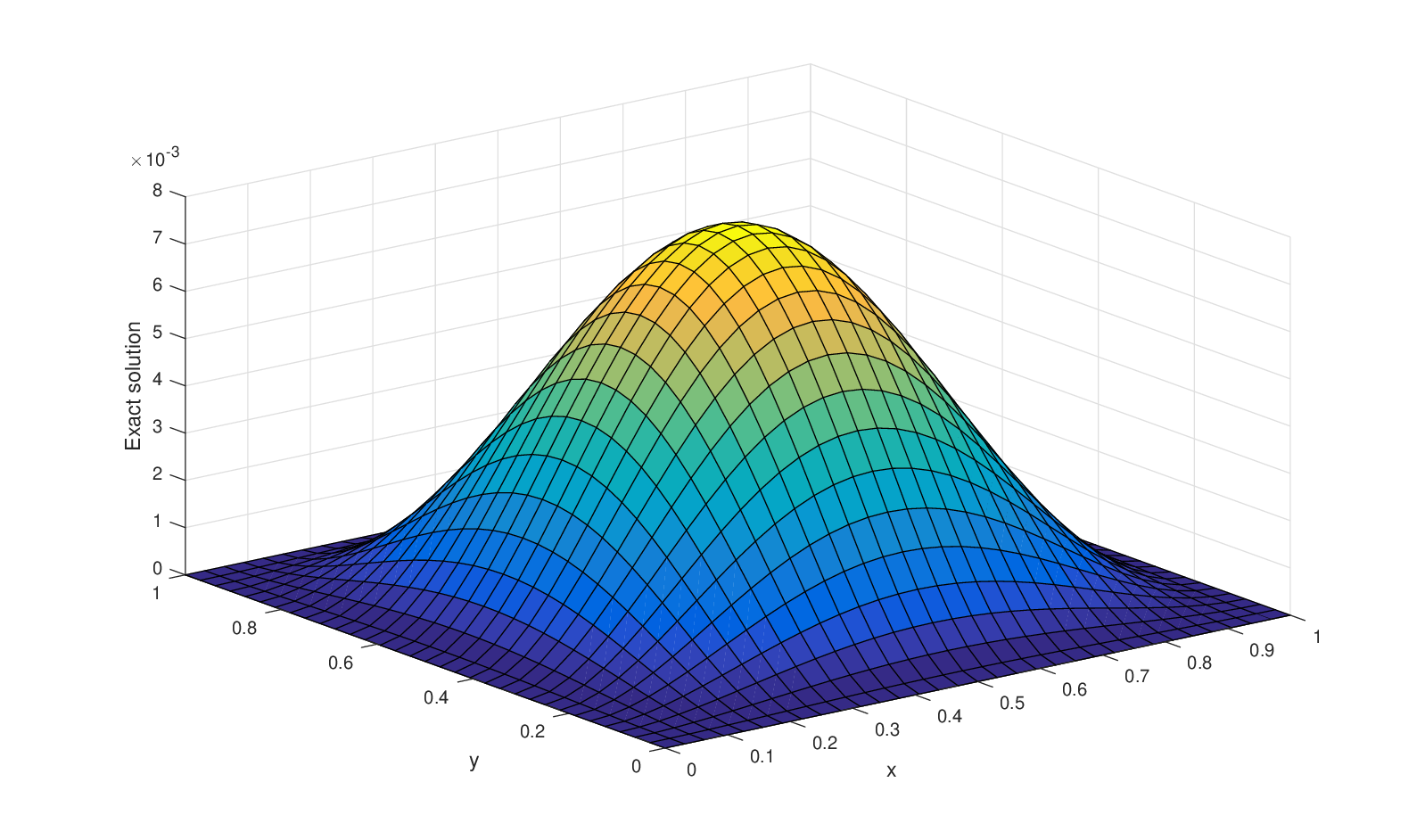}
\includegraphics[width=2.4in,height=2.4in]{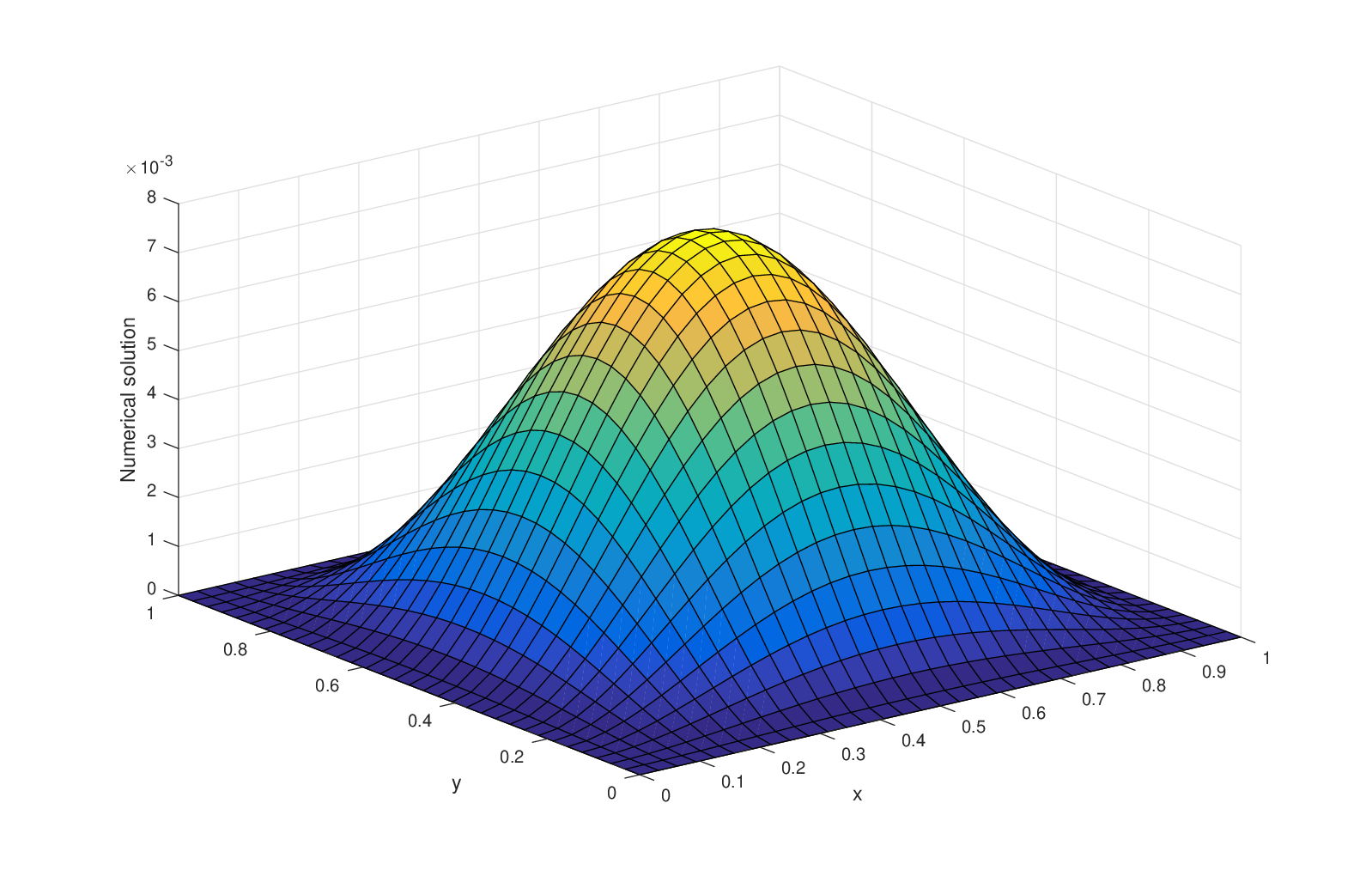}
\caption{The exact solution and numerical solution at $t = 1$
with $\alpha = 0.1$, $h_{x} = h_{y} = 1/32$ and $\tau = 1/1000$.}
\label{fig4}
\end{figure}

\subsection{The 2D case}
\label{sec4.2}
In this subsection, we think about the 2D TFRDE with zero boundary condition.

\noindent{\textbf{Example 2.}} In\/ \eqref{eq3.1}-\eqref{eq3.4},
take $L_{x} = L_{y} = 1$, $T = 1$ and coefficients
$d(x,y,t) = 2 - \sin(x y t)$, $k(x,y,t) = 1 + x y \exp(-t)$,
$q(x,y,t) = (x + y) t$ and the source term
\begin{align*}
f(x,y,t) = & x^2 (1 - x)^2 y^2 (1 - y)^2 \left[ (4 + \alpha) t^{3 + \alpha}
+ \frac{\Gamma(5 + \alpha)}{\Gamma(5)} t^4 \right]
- \Big\{ y^2 (1 - y)^2 \times \\
& \left[ - y t \cos(xyt) \left( 4 x^3 - 6 x^2 + 2 x \right)
+ \left( 2 - \sin(xyt) \right) \left( 12 x^2 - 12 x + 2 \right) \right] + \\
& x^2 (1 - x)^2 \left[ x \exp(-t) \left( 4 y^3 - 6 y^2 + 2 y \right)
+ \left( 1 + x y \exp(-t) \right) \left( 12 y^2 - 12 y + 2 \right) \right] \\
& - x^2 (1 - x)^2 y^2 (1 - y)^2 (x + y) t \Big\} (t^{4 + \alpha} + 1).
\end{align*}
Hence the causal solution is $u(x,y,t) = (t^{4 + \alpha} + 1) x^2 (1 - x)^2 y^2 (1 - y)^2$.

As one can see from Tables \ref{tab5}-\ref{tab6},
the numerical solution provided by the difference approximation\/ \eqref{eq3.5}
is in good agreement with our theoretical analysis.
In Table \ref{tab5}, fix $h_{x} = h_{y} = 1/1000$,
the errors in maximum norm and $L_2$-norm decrease steadily with the shortening of time step,
and the convergence order of time is the expected $\mathcal{O}(\tau^2)$.
Furthermore, from Table \ref{tab5},
although the temporal convergence orders of the proposed method are slightly bigger than
Gao's method, the errors of our method are smaller than Gao's method.
While in Table \ref{tab6}, the mesh size $\tau = 1/1000$ is chosen
and the spatial convergence rates of the scheme\/ \eqref{eq3.5} are also near to two,
for $\alpha = 0.1, 0.5, 0.9, 0.99$, which is consistent with the theoretical result in Section \ref{sec3.2}.
Table \ref{tab6} also displays that the errors and convergence orders
of the two methods are almost the same.
To further illustrate the efficiency of the proposed difference scheme\/ \eqref{eq3.5},
Fig. \ref{fig4} shows surface solutions at $t = 1$
with the mesh sizes $\tau = 1/1000$, $h_{x} = h_{y} = 1/32$.
The good agreement of simulate solutions with the exact solutions can be clearly seen.
% Table 6
\begin{table}[H]\footnotesize \tabcolsep=3.5pt
\caption{$L_2$-norm and maximum norm errors and convergence orders for Example 3 where $\tau = 1/1000$.}
\centering
\begin{tabular}{cccccccccc}
\hline
& & \multicolumn{4}{c}{Our method} & \multicolumn{4}{c}{Gao's method} \\
[-2pt] \cmidrule(lr){3-6} \cmidrule(lr){7-10} \\ [-10pt]
$\alpha$ & $h_{x} = h_{y}$ & $Error_3(h, \tau)$ & $rate3_{h}$ & $Error_4(h, \tau)$ & $rate4_{h}$
& $Error_3(h, \tau)$ & $rate3_{h}$ & $Error_4(h, \tau)$ & $rate4_{h}$ \\
\hline
0.10 & 1/4 & 1.5420e-03 & -- & 7.8627e-04 & -- & 1.5420e-03 & -- & 7.8627e-04 & --  \\
     & 1/8 & 3.8160e-04 & 2.0147 & 1.9651e-04 & 2.0004 & 3.8159e-04 & 2.0147 & 1.9651e-04 & 2.0004 \\
     & 1/16 & 9.5245e-05 & 2.0023 & 4.9173e-05 & 1.9987 & 9.5240e-05 & 2.0024 & 4.9171e-05 & 1.9987 \\
     & 1/32 & 2.3804e-05 & 2.0005 & 1.2297e-05 & 1.9996 & 2.3798e-05 & 2.0007 & 1.2295e-05 & 1.9997 \\
     & 1/64 & 5.9563e-06 & 1.9987 & 3.0744e-06 & 1.9999 & 5.9508e-06 & 1.9997 & 3.0722e-06 & 2.0007 \\
\hline
0.50 & 1/4 & 1.5218e-03 & -- & 7.7609e-04 & -- & 1.5218e-03 & -- & 7.7609e-04 & -- \\
     & 1/8 & 3.7667e-04 & 2.0144 & 1.9403e-04 & 2.0000 & 3.7666e-04 & 2.0144 & 1.9403e-04 & 1.9999 \\
     & 1/16 & 9.4016e-05 & 2.0023 & 4.8554e-05 & 1.9986 & 9.4014e-05 & 2.0023 & 4.8553e-05 & 1.9986  \\
     & 1/32 & 2.3495e-05 & 2.0006 & 1.2141e-05 & 1.9997 & 2.3492e-05 & 2.0007 & 1.2140e-05 & 1.9998 \\
     & 1/64 & 5.8756e-06 & 1.9995 & 3.0341e-06 & 2.0005 & 5.8732e-06 & 2.0000 & 3.0332e-06 & 2.0009 \\
\hline
0.90 & 1/4 & 1.4876e-03 & -- & 7.5889e-04 & -- & 1.4876e-03 & -- & 7.5889e-04 & -- \\
     & 1/8 & 3.6829e-04 & 2.0140 & 1.8981e-04 & 1.9994 & 3.6829e-04 & 2.0141 & 1.8981e-04 & 1.9993 \\
     & 1/16 & 9.1930e-05 & 2.0023 & 4.7502e-05 & 1.9985 & 9.1929e-05 & 2.0023 & 4.7502e-05 & 1.9985 \\
     & 1/32 & 2.2973e-05 &2.0006  & 1.1877e-05 & 1.9998 & 2.2973e-05 & 2.0006 & 1.1877e-05 & 1.9998 \\
     & 1/64 & 5.7422e-06 & 2.0003 & 2.9672e-06 & 2.0010 & 5.7420e-06 & 2.0003 & 2.9671e-06 & 2.0010 \\
\hline
0.99 & 1/4 & 1.4761e-03 & -- & 7.5313e-04 & -- & 1.4761e-03 & -- & 7.5313e-04 & -- \\
     & 1/8 & 3.6549e-04 & 2.0139 & 1.8839e-04 & 1.9992 & 3.6549e-04 & 2.0139 & 1.8839e-04 & 1.9992 \\
     & 1/16 & 9.1231e-05 & 2.0022 & 4.7149e-05 & 1.9984 & 9.1231e-05 & 2.0022 & 4.7149e-05 & 1.9984 \\
     & 1/32 & 2.2799e-05 & 2.0006 & 1.1789e-05 & 1.9998 & 2.2799e-05 & 2.0006 & 1.1789e-05 & 1.9998 \\
     & 1/64 & 5.6981e-06 & 2.0004 & 2.9450e-06 & 2.0011 & 5.6981e-06 & 2.0004 & 2.9449e-06 & 2.0012 \\
\hline
\end{tabular}
\label{tab6}
\end{table}

\subsection{Preconditioned iterative methods for solving (\ref{eq3.5})}
\label{sec4.3}
According to the property of operator $\tilde{\Lambda}$ in the proof of Theorem \ref{th3.1},
the matrix
$- 2 \tau \sigma \Big( h_{y}^2 \tilde{A}^{j + \sigma} + h_{x}^2 \tilde{B}^{j + \sigma}
- h_{x}^2 h_{y}^2 \tilde{Q}^{j + \sigma} \Big)$
is a symmetric positive definite matrix.
Based on this, we can further indicate that
the coefficient matrix $\mathcal{S}^{j + 1}$ in Eq. \eqref{eq3.5}
is a sparse symmetric positive definite matrix.
Meanwhile, considering that the linear system\/ \eqref{eq3.5} may ill-conditioned,
hence, in this work, the aggregation-based multigrid iterative method (AGMG)
\cite{notay2010aggregation, napov2012algebraic, notay2012aggregation}
and the conjugate gradient method (CG) \cite{saad2003iterative}
with two preconditioners\footnote{It remarks that such preconditioners are obtained by MATLAB codes:
\texttt{ichol(S, struct(`type', `ict', `droptol', \\ 1e-2))} and
\texttt{ichol(S, struct(`type', `nofill', `michol', `on'))};
refer to \cite{li2010application, zhang2012flexible} and references therein.}
are adopted to solve\/ \eqref{eq3.5}.
For convenience, the two preconditioned CG methods are abbreviated
as ``ict(1e-2)" and ``michol(0)" in this subsection, respectively.
Two functions \texttt{cs\_ltsolve} and \texttt{cs\_lsolve},
which are built-in functions of the MATLAB software package \texttt{CSparse}
(download from \url{http://faculty.cse.tamu.edu/davis/SuiteSparse/}) are used
to fast implement $P^{-1}x$, where $P$ represents
a preconditioner\footnote{The MATLAB code is given as \texttt{Px = @(x) cs\_ltsolve(L, cs\_lsolve(L,x))},
where $\texttt{L}$ is a matrix received from \texttt{ichol(S,struct(`type', `ict', `droptol', 1e-2))}
or \texttt{ichol(S,struct(`type', `nofill', `michol', `on'))}.}.
See also \cite{davis2006direct}.
% Figure 6
\begin{figure}[t]
\centering
\includegraphics[width=2.4in,height=2.2in]{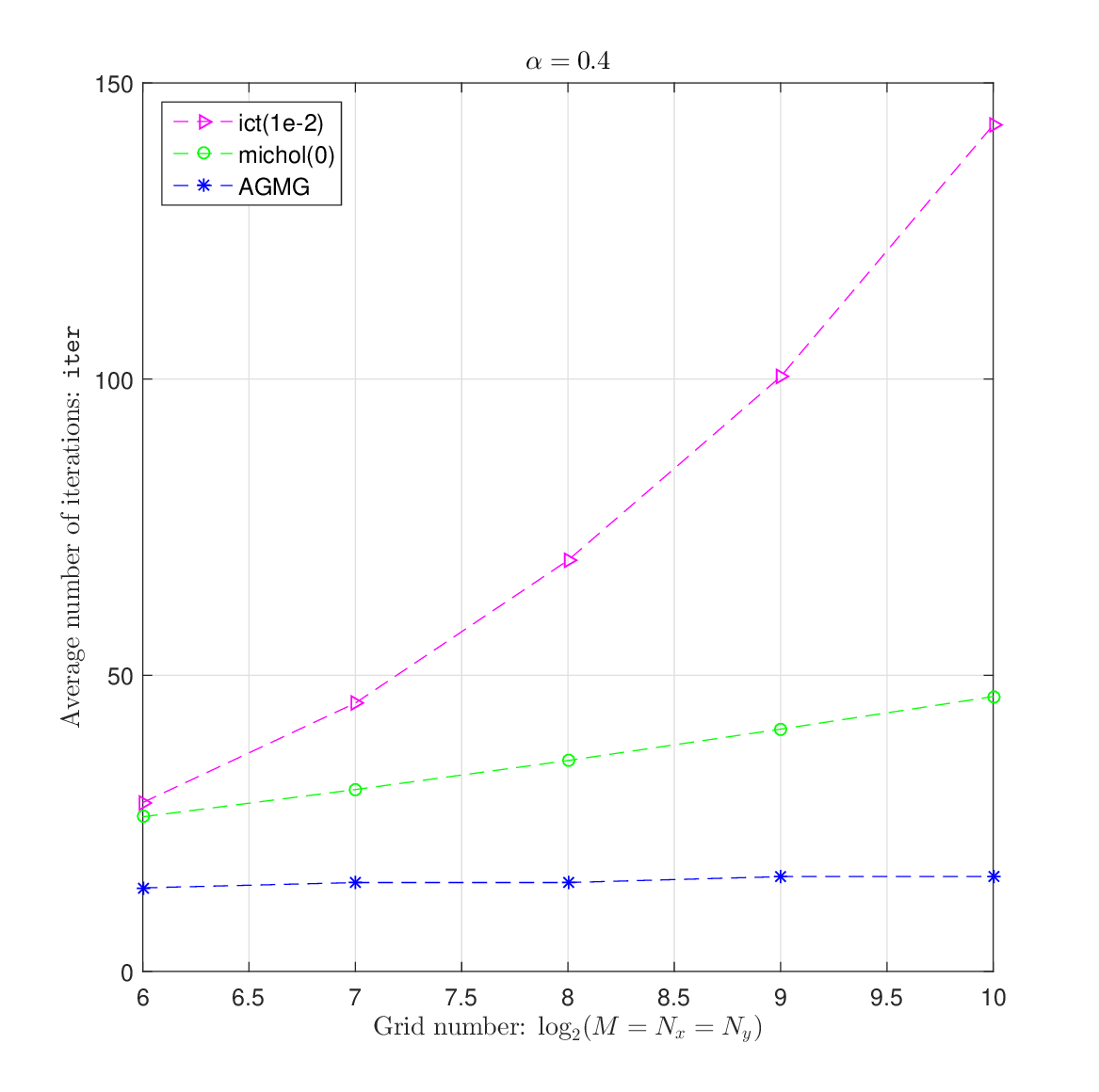}
\includegraphics[width=2.4in,height=2.2in]{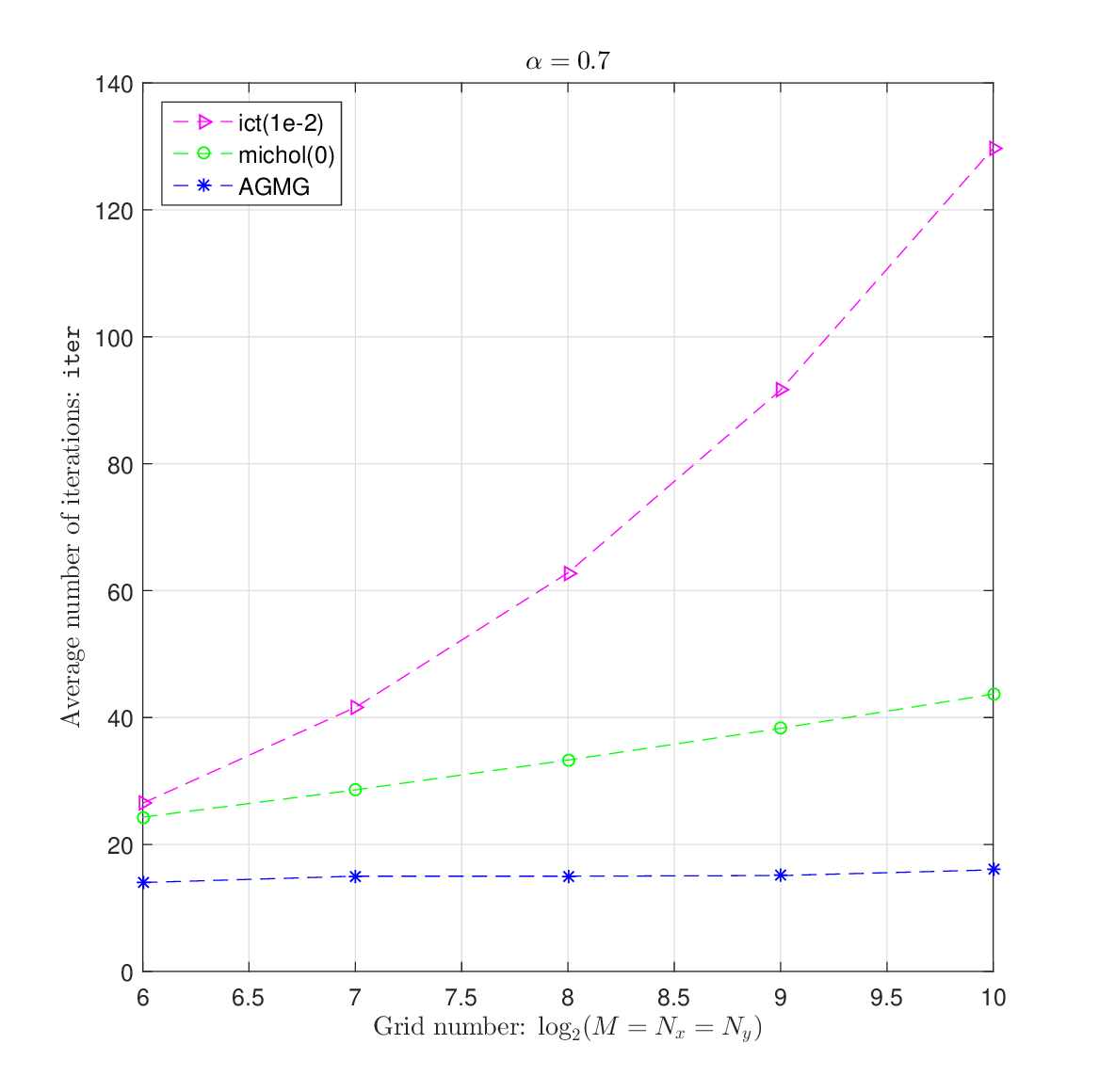}\\
\includegraphics[width=2.4in,height=2.2in]{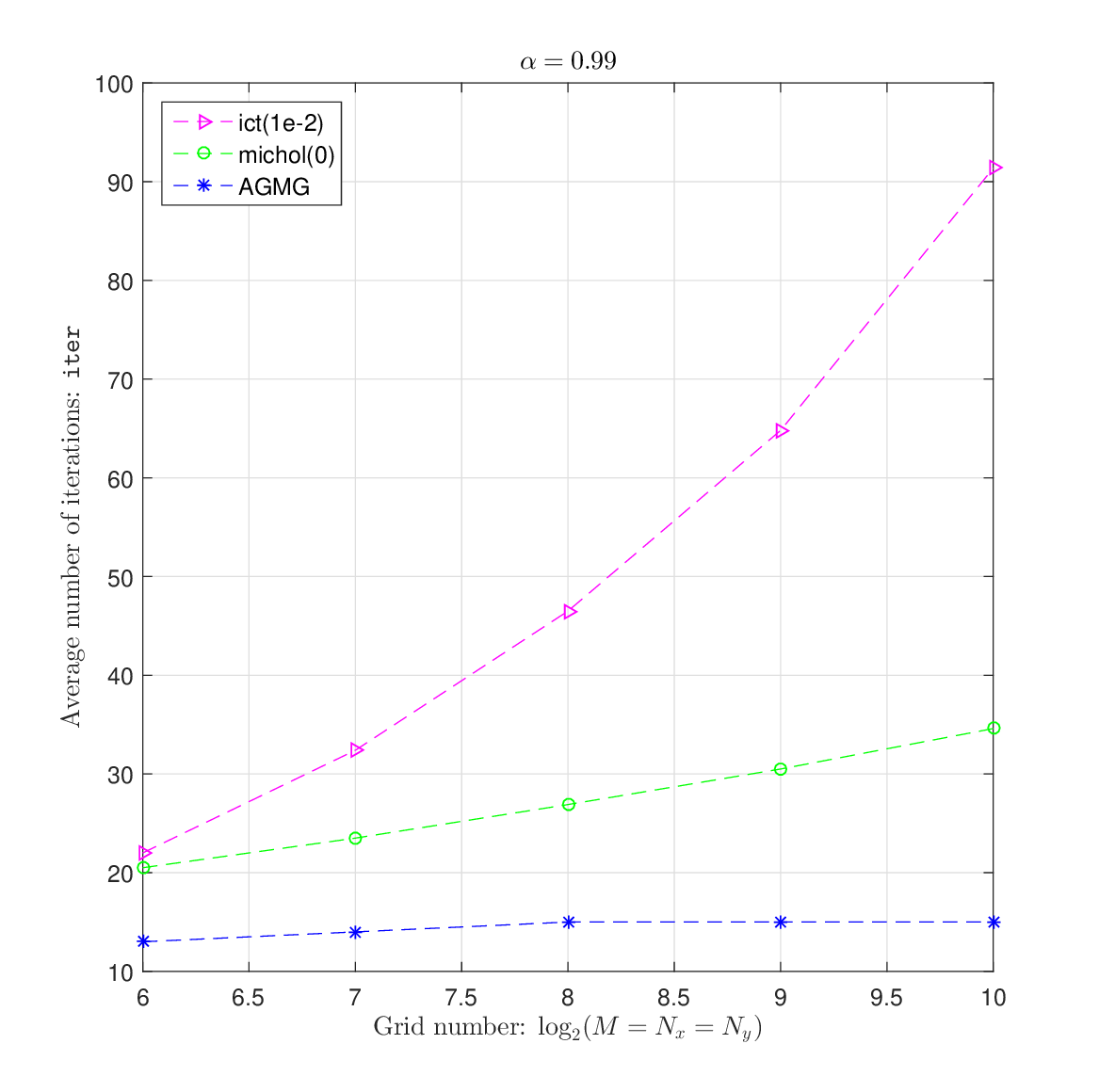}
\caption{Comparison of the average numbers of iterations for ict(1e-2), michol(0)
and AGMG with $\alpha = 0.4,0.7,0.99$.}
\label{fig6}
\end{figure}
% table 9
\begin{table}[H]\small\tabcolsep=0.2cm%5.3pt
\begin{center}
\caption{{\small {Performance of the three proposed preconditioned iterative methods
with $\alpha = 0.40,0.70,0.99$.}}}
\begin{tabular}{cccccccc}
\hline  & & \multicolumn{2}{c}{$\rm{ict(1e}$-$\rm{2)}$} & \multicolumn{2}{c}{$\rm{michol(0)}$}
& \multicolumn{2}{c}{$\rm{AGMG}$} \\
[-2pt] \cmidrule(lr){3-4} \cmidrule(lr){5-6} \cmidrule(lr){7-8} \\ [-10pt]
$\alpha$ & $\tau = h_x = h_y$ & \texttt{Time} & \texttt{Iter} & \texttt{Time}
& \texttt{Iter} & \texttt{Time} & \texttt{Iter}  \\
\hline
0.40 & $2^{-6}$ & 1.87 & 28.5 & 1.76 & 26.1 & 1.82 &	14.1 \\
      & $2^{-7}$ & 17.11 & 45.3 & 12.73 & 30.7 & 13.12 & 15.0 \\
      & $2^{-8}$ & 187.22 & 69.4 & 114.32 & 35.6 & 106.50 & 15.0 \\
      & $2^{-9}$ & 2382.33 & 100.4 & 1059.58 & 40.9 & 914.46 & 16.0  \\
      & $2^{-10}$ & 209402.48 & 142.9 & 9688.17 & 46.4 & 8362.73 & 16.0 \\
0.70 & $2^{-6}$ & 1.83 & 26.5 & 1.69 & 24.3 & 1.82 & 14.0 \\
         & $2^{-7}$ & 15.67 & 	41.6 & 12.57 & 28.6 & 13.20 & 15.0 \\
         & $2^{-8}$ & 171.83 & 62.8 & 112.13 & 33.3 & 107.05 & 15.0 \\
         & $2^{-9}$ & 1960.39 & 91.6 & 1022.63 & 38.3 & 893.47 & 15.1 \\
         & $2^{-10}$ & 21248.69 & 129.7 & 9436.39 & 43.7 & 8385.46 & 16.0 \\
0.99 & $2^{-6}$ & 1.67 & 22.0 & 1.60 & 20.5 & 1.77 & 13.0 \\
         & $2^{-7}$ & 14.02 & 32.4 & 11.40 & 23.5 & 12.86 & 14.0 \\
         & $2^{-8}$ & 143.15 & 46.5 & 101.78 & 26.9 & 107.51 & 15.0 \\
         & $2^{-9}$ & 1528.68 & 64.8 & 909.97 & 30.5 & 891.66 & 15.0 \\
         & $2^{-10}$ & 16215.25 & 91.4 & 8397.78 & 34.6 & 8190.02 & 15.0 \\
\hline
\end{tabular}
\label{tab9}
\end{center}
\end{table}
% Figure 7
\begin{figure}[t]
	\centering
	\includegraphics[width=2.4in,height=2.2in]{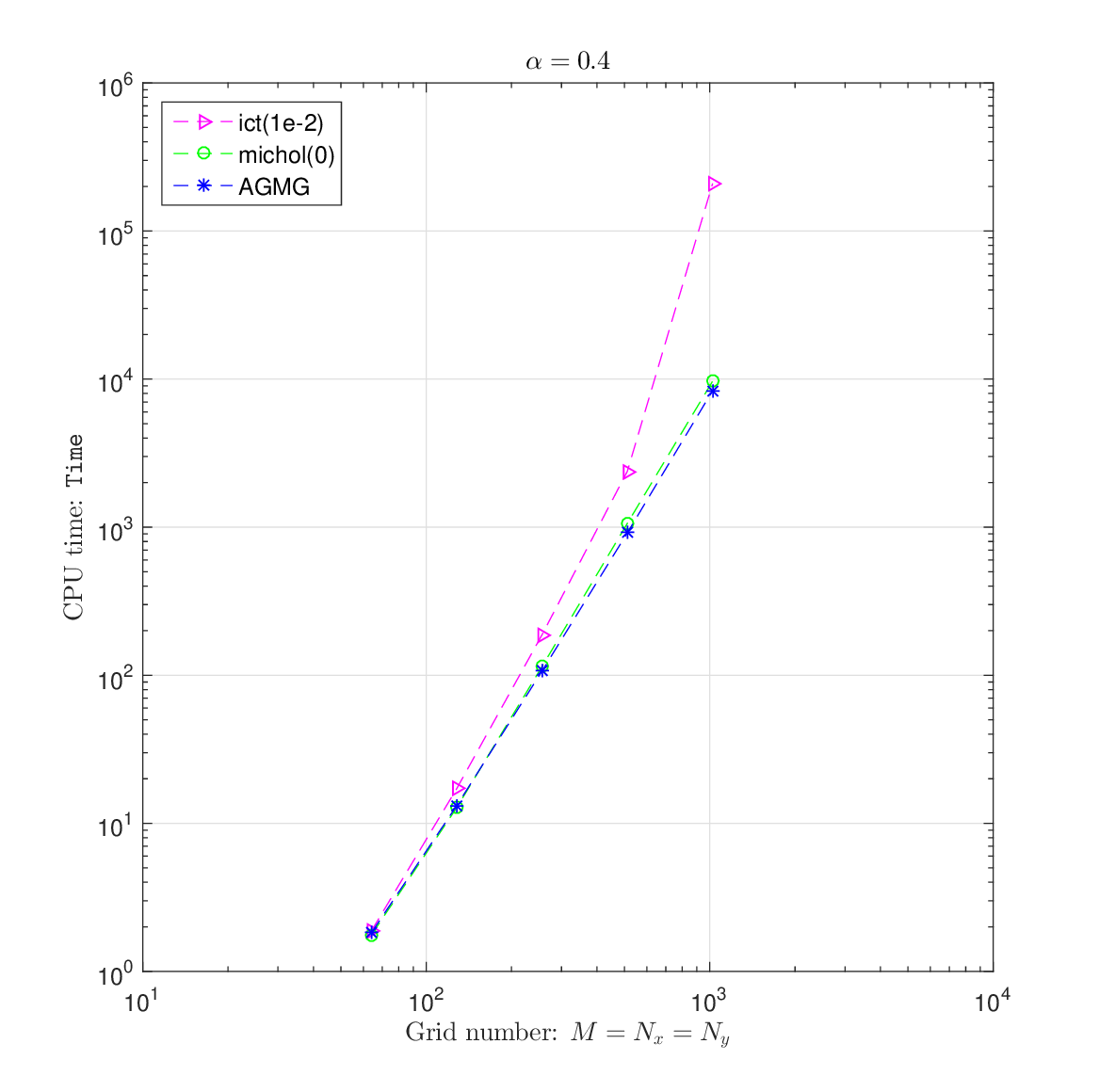}
	\includegraphics[width=2.4in,height=2.2in]{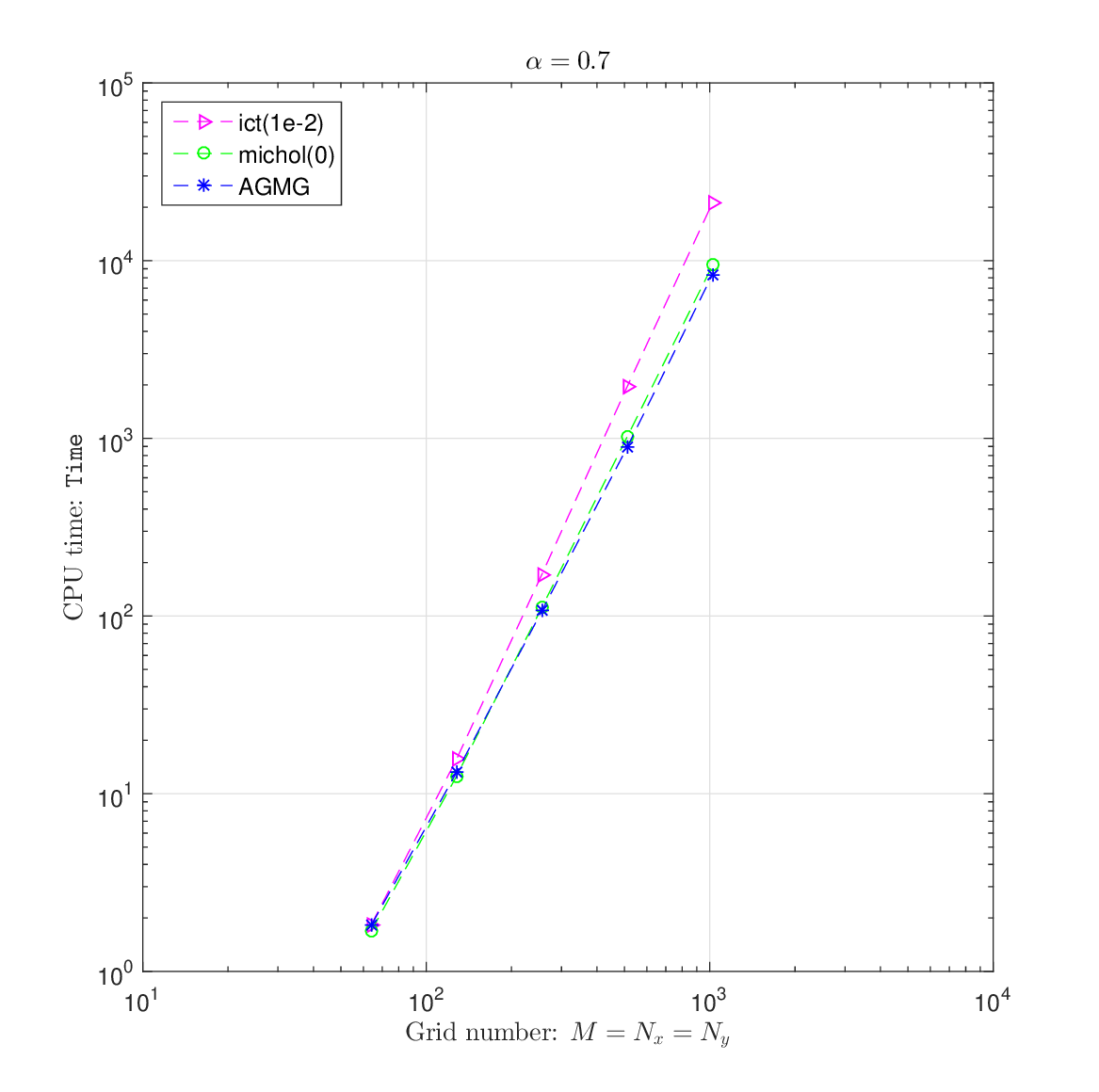}\\
	\includegraphics[width=2.4in,height=2.2in]{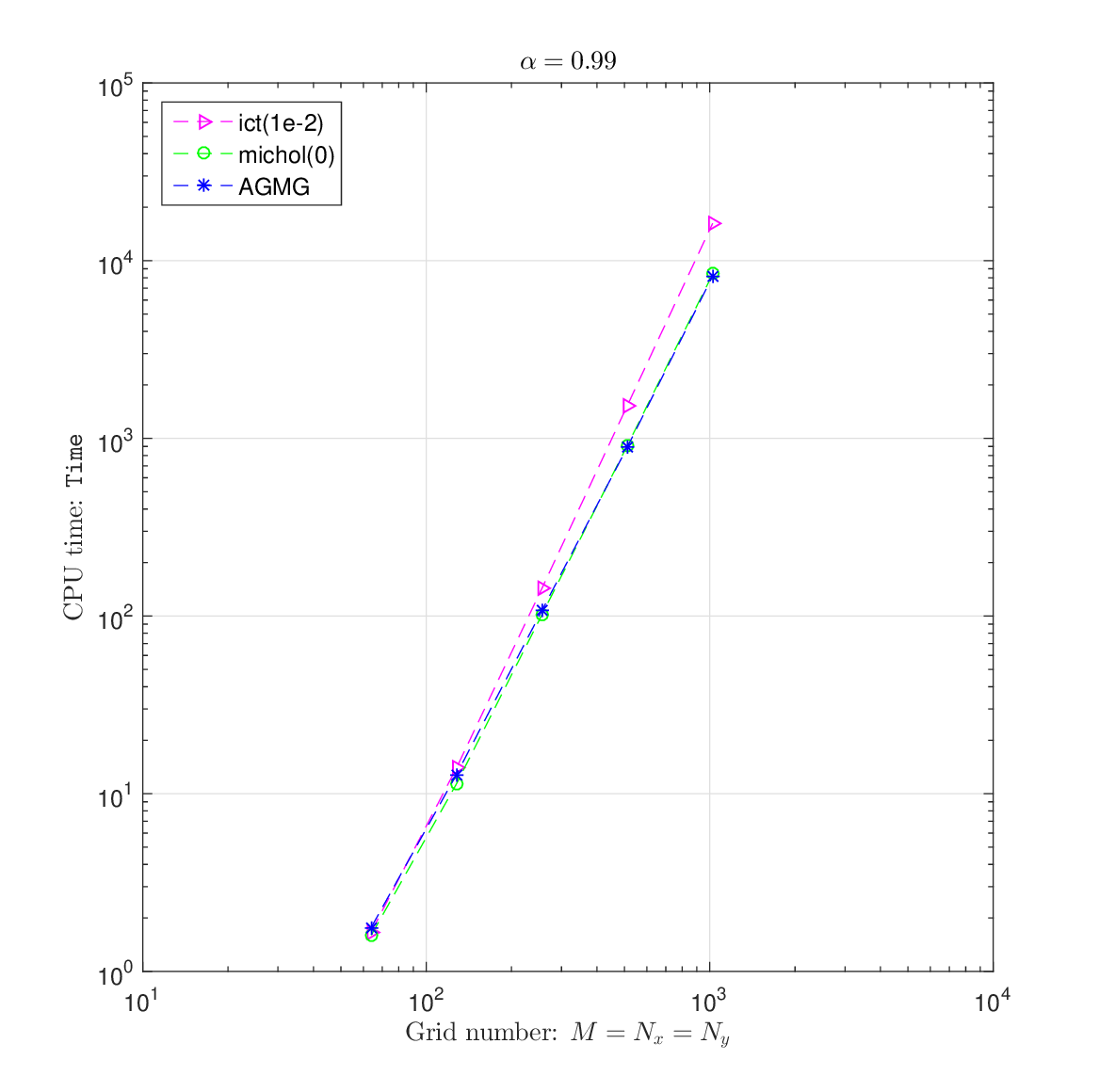}
	\caption{Comparison of CPU time for ict(1e-2), michol(0) and AGMG with $\alpha = 0.4,0.7,0.99$.}
	\label{fig7}
\end{figure}

\noindent{\textbf{Example 3.}} Above mentioned preconditioned iterative methods are adopted in this example,
the coefficients $d(x,y,t)$, $k(x,y,t)$, $q(x,y,t)$, source term $f(x,y,t)$
and the exact solution $u(x,y,t)$ are given in Example 2.
In the rest of this work,
``\texttt{Time}" denotes CPU time for solving\/ \eqref{eq3.5} with a preconditioned iterative method,
and ``\texttt{Iter}" represents the average number of iterations required to solve this linear system,
i.e.,
\begin{equation*}
\texttt{Iter} = \frac{1}{M} \sum\limits_{m = 1}^{M} \texttt{Iter}(m),
\end{equation*}
in which $\texttt{Iter}(m)$ is the number of iterations required for solving\/ \eqref{eq3.5}.
Those preconditioned iterative methods terminate if the relative residual error satisfies
$\frac{\| r^k \|}{\| r^0 \|} \leq 10^{-10}$ or the iteration number is more than $1000$,
where $r^k$ is the residual vector of the linear system after $k$ iteration,
and the initial guess at each time step is chosen as the zero vector.

In Table \ref{tab9}, the AGMG method is the cheapest one among these three methods,
in aspect of average iteration number.
Moreover, it shows that the average iteration numbers of AGMG
are not strongly depend on the mesh size, see the blue curves in Fig. \ref{fig6}.
On the other hand, Fig. \ref{fig6} implies that the average numbers of iterations of
ict(1e-2) grow more rapidly than michol(0) in a same problem.
When $M = N_x = N_y = 2^8$, $2^9$ and $2^{10}$,
the calculation time of AGMG method also is the least one among them.
Although the calculation times of ict(1e-2) and michol(0)
for small test problems ($M = N_x = N_y = 2^6, 2^7$) are cheaper than AGMG,
the average iteration numbers of them are bigger than AGMG.
From another point of view, the log-log curves in Fig. \ref{fig7} are plotted to further display their performances in CPU time.
In addition, the CPU time and average iteration number of all these proposed preconditioned iterative methods
are decreasing along with the increase of $\alpha$.
As a conclusion, these results are not very satisfactory.
So our further work is to seek more economical preconditioners to solve fast problem\/ \eqref{eq3.5}.

\section{Conclusion}
\label{sec5}
Two implicit finite difference schemes combined with Alikhanov's $L2$-$1_{\sigma}$ formula are considered
for solving both 1D and 2D time fractional reaction-diffusion equations
with variable coefficients and time drift term.
The unconditional stability and convergence of the schemes in $L_{2}$-norm
are derived by the discreted energy method,
and the convergence orders of our obtained schemes are two both in time and space, even under maximum norm.
Two numerical experiments are reported to verify the theoretical results,
which reflect that the schemes indeed have second order accuracy in both time and space.
Considering that sometimes the linear system (\ref{eq3.5}) may be ill-conditioned,
two preconditioned CG methods and AGMG are adopted for solving (\ref{eq3.5}),
and numerical results are displayed in Example 3.
In the future work, the higher-order interpolation approximation
to a nonlinear time and space fractional reaction-diffusion equation
with variable coefficients will be taken into account.

\section*{Acknowledgments}
\addcontentsline{toc}{section}{Acknowledgments}
\label{sec6}

This research is supported by the National Natural Science Foundation
of China (Nos. 61876203, 61772003 and 11801463)
and the Fundamental Research Funds for the Central Universities (Nos. ZYGX2016J132 and JBK1809003).
We are grateful to the anonymous referees and editors for their insightful suggestions
and comments that improved the presentation of this paper.

%%%% Bibliography  %%%%%%%%%%
\section*{References}
\addcontentsline{toc}{section}{References}

\end{document}